\newcommand\cyr{%
\renewcommand\rmdefault{wncyr}%
\renewcommand\sfdefault{wncyss}%
\renewcommand\encodingdefault{OT2}%
\normalfont
\selectfont}
\DeclareTextFontCommand{\textcyr}{\cyr}
\newcommand{\mint}{{\times}\kern-0.89em{\int}} 
\newcommand{\ks}{\boldsymbol{\kappa}}
\newcommand{\sha}{\textrm{{\cyr SH}}}
\newtheorem{thm}{Theorem}[section]
\newtheorem{cor}[thm]{Corollary}
\newtheorem{lem}[thm]{Lemma}
\newtheorem{assu}[thm]{Assumption}
\newtheorem{conj}[thm]{Conjecture}
\theoremstyle{definition}
\newtheorem{rem}[thm]{Remark}
\begin{document}
\title{Indivisibility of Heegner points and arithmetic applications}
\author{Ashay A. Burungale}
\address[Burungale]{School of Mathematics, Institute for Advanced Study, Einstein Drive, Princeton, NJ 08540, USA}
\email{ashayburungale@gmail.com}
\author{Francesc Castella}
\address[Castella]{Department of Mathematics, Princeton University, Washington Road, Princeton, NJ 08544-1000, USA}
\email{fcabello@math.princeton.edu}
\author{Chan-Ho Kim}
\address[Kim]{School of Mathematics, Korea Institute for Advanced Study, 85 Hoegi-ro, Dongdaemun-gu, Seoul 02455, Republic of Korea}
\email{chanho.math@gmail.com}
\date{\today}
\subjclass[2010]{11R23 (Primary); 11F33 (Secondary)}
\keywords{Iwasawa theory, Heegner points, Kolyvagin systems}
\maketitle
\begin{abstract}
We upgrade Howard's divisibility towards Perrin-Riou's Heegner point main conjecture to the predicted equality. Contrary to previous works in this direction, our main result allows for the classical Heegner hypothesis and non-squarefree conductors. The main ingredients we exploit  
are W.~Zhang's proof of Kolyvagin's conjecture, Kolyvagin's structure theorem for Shafarevich--Tate groups, and the explicit reciprocity law for Heegner points.
\end{abstract}
\setcounter{tocdepth}{1}
\tableofcontents

\section{Introduction}
Let $E/\mathbb{Q}$ be an elliptic curve of conductor $N$
and let $K$ be an imaginary quadratic field of discriminant $D_K$ with $(D_K, N) = 1$.
Then $K$ determines a factorization
\[
N=N^+N^-
\]
with $N^+$ (resp. $N^-$) divisible only by primes which are split (resp. inert) in $K$.
Throughout this paper, the following hypothesis will be in force:
\begin{assu}[Generalized Heegner hypothesis] \label{assu:gen_heeg}
	$N^-$ is the square-free product of an \emph{even} number of primes. 
\end{assu}
Let $p>3$ be a good ordinary prime for $E$ with $(p, D_K) = 1$, and 
let $K_\infty$ be the anticyclotomic $\mathbb{Z}_p$-extension of $K$. 
Under Assumption \ref{assu:gen_heeg}, the theory of complex multiplication provides a collection of CM points on a Shimura curve with ``$\Gamma_0(N^+)$-level structure'' attached to the quaternion algebra $B/\mathbf{Q}$ of discriminant $N^-$ defined over ring class extensions of $K$. By modularity, these points give rise to Heegner points on $E$ defined over the ring class extensions, and exploiting the $p$-ordinarity assumption these can be turn into a norm-compatible system of Heegner points on $E$ over the $\mathbb{Z}_p$-extension $K_\infty/K$.

Let $T$ be the $p$-adic Tate module of $E$, and set $V:=T\otimes\mathbb{Q}_p$ and $A:= V/T \simeq E[p^\infty]$. Let $\Lambda = \mathbf{Z}_p \llbracket \mathrm{Gal}(K_\infty/K )\rrbracket$ be the anticyclotomic Iwasawa algebra, and let $\mathbf{T}$ and $\mathbf{A}$ be the $\Lambda$-adic versions of $T$ and $A$, respectively, recalled in Section \ref{sec:Sel}.
Let
\[
\mathrm{Sel}_{\mathrm{Gr}}(K,\mathbf{T})\subset \varprojlim\mathrm{H}^1(K_n,T),\quad \mathrm{Sel}_{\mathrm{Gr}}(K,\mathbf{A})\subset \varinjlim\mathrm{H}^1(K_n,A)
\] 
be $\Lambda$-adic Greenberg ordinary Selmer groups defined in \cite{howard-kolyvagin, howard-gl2-type} (and recalled in Section~\ref{sec:Sel} below), where $K_n$ is the unique subextension of $K_\infty$ with $[K_n:K]=p^n$. 
Letting $\mathrm{Sel}_{p^m}(E/K_n)\subset\mathrm{H}^1(K_n,E[p^m])$ be the $p^m$-th descent Selmer groups fitting into the fundamental exact sequence
\[
0\longrightarrow E(K_n)\otimes\mathbf{Z}/p^m\mathbf{Z}\longrightarrow\mathrm{Sel}_{p^m}(E/K_n)\longrightarrow\sha(E/K_n)[p^m]\longrightarrow 0,
\]
there are $\Lambda$-module pseudo-isomorphisms
\[
\mathrm{Sel}_{\mathrm{Gr}}(K,\mathbf{T}) \sim \varprojlim_n\varprojlim_m\mathrm{Sel}_{p^m}(E/K_n),\quad \mathrm{Sel}_{\mathrm{Gr}}(K,\mathbf{A}) \sim \varinjlim_n\varinjlim_m\mathrm{Sel}_{p^m}(E/K_n).
\]

Via the Kummer map, the norm-compatible sequence of Heegner points on $E$ along $K_{\infty}/K$ gives rise to a $\Lambda$-adic Heegner cohomology class $\kappa_{1}^{\infty} \in\mathrm{Sel}_{\mathrm{Gr}} \left( K, \mathbf{T} \right)$ which was first shown to be non-torsion by Cornut--Vatsal \cite{cornut-vatsal}.  
After Kolyvagin \cite{kolyvagin-mw-sha}, the non-triviality of a Heegner point over a ring class field $H/K$ implies that the Mordell--Weil rank of the underlying abelian variety over $H$ being one and also the finiteness of the corresponding Tate--Shafarevich group, with the index of the Heegner point in the Mordell--Weil group being closely to the size of the Tate--Shafarevich group (essentially, the latter is the square of the former). After Perrin-Riou \cite[Conj.~B]{perrin-riou-heegner} and Howard \cite{howard-gl2-type}, a $\Lambda$-adic analogue of this result takes the form of the following  ``Heegner point main conjecture'', where we let $\iota:\Lambda\rightarrow\Lambda$ be the involution induced by the inversion in $\mathrm{Gal}(K_\infty/K)$.

\begin{conj}[Perrin-Riou, Howard]\label{conj:HPMC}
	The $\Lambda$-modules $\mathrm{Sel}_{\mathrm{Gr}}(K,\mathbf{T})$ and  $\mathrm{Sel}_{\mathrm{Gr}}(K,\mathbf{A})$ have rank $1$ and corank 1, respectively. Letting
	\[
	X=\mathrm{Hom}_{\mathbf{Z}_p}(\mathrm{Sel}_{\mathrm{Gr}}(K,\mathbf{A}),\mathbf{Q}_p/\mathbf{Z}_p)
	\]
	be the Pontrjagin dual of $\mathrm{Sel}_{\mathrm{Gr}}(K,\mathbf{A})$, there is a torsion $\Lambda$-module $M_\infty$ with: 
	\begin{enumerate}
		\item $\mathrm{char}(M_\infty)=\mathrm{char}(M_\infty)^\iota$, 
		\item $X\sim\Lambda\oplus M_\infty\oplus M_\infty$,
		\item $\mathrm{char}(M_{\infty})=\mathrm{char}\biggl( \dfrac{ \mathrm{Sel}_{\mathrm{Gr}} \left( K, \mathbf{T} \right) }{ \Lambda\kappa^\infty_1 }\biggr)$.	
	\end{enumerate}
\end{conj}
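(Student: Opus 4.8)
The plan is to deduce Conjecture~\ref{conj:HPMC} from W.~Zhang's theorem by a descent to a single finite layer, taking Howard's partial result as the starting point. By Cornut--Vatsal and Howard, recalled above, $\mathrm{Sel}_{\mathrm{Gr}}(K,\mathbf{T})$ has $\Lambda$-rank one and $\kappa^\infty_1$ is not $\Lambda$-torsion, so that $\mathrm{Sel}_{\mathrm{Gr}}(K,\mathbf{T})/\Lambda\kappa^\infty_1$ is torsion with a well-defined characteristic ideal; $X$ has $\Lambda$-rank one, the pseudo-isomorphism $X\sim\Lambda\oplus M_\infty\oplus M_\infty$ holds, and $\mathrm{char}(M_\infty)=\mathrm{char}(M_\infty)^\iota$; and one of the two divisibilities between $\mathrm{char}(M_\infty)$ and $\mathrm{char}\bigl(\mathrm{Sel}_{\mathrm{Gr}}(K,\mathbf{T})/\Lambda\kappa^\infty_1\bigr)$ is known. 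Thus parts (1) and (2) hold and only the reverse divisibility in (3) remains; once it holds the two ideals coincide.

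For the missing divisibility I would use a local-ring reduction to a single finite-order character. Let $h\in\Lambda$ be the ratio of the two characteristic ideals, taken in the direction in which Howard's divisibility makes it an element of $\Lambda$. If there is a finite-order character $\chi\colon\mathrm{Gal}(K_\infty/K)\to\overline{\mathbf{Q}}_p^\times$ for which $\lvert\mathrm{char}(M_\infty)(\chi)\rvert_p=\lvert\mathrm{char}\bigl(\mathrm{Sel}_{\mathrm{Gr}}(K,\mathbf{T})/\Lambda\kappa^\infty_1\bigr)(\chi)\rvert_p\ne 0$, then $\lvert h(\chi)\rvert_p=1$, which by the Weierstrass preparation theorem forces $h$ to have vanishing $\mu$- and $\lambda$-invariants, hence to be a unit of $\Lambda$; so the two characteristic ideals are equal. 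As neither characteristic ideal is the zero ideal, each vanishes at only finitely many $\chi$, so it is enough to exhibit one character at which the two specializations have the same $p$-adic absolute value.

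To produce such a $\chi$, say one factoring through $\mathrm{Gal}(K_n/K)$, I would descend to arithmetic over the ring class fields attached to $K_n$. Control theorems for the Greenberg Selmer groups, combined with the pseudo-isomorphism $X\sim\Lambda\oplus M_\infty\oplus M_\infty$ and with the explicit reciprocity law for Heegner points---which controls $\kappa^\infty_1$ at the primes above $p$ and matches the specialization of $\kappa^\infty_1$ at $\chi$, up to the expected Euler factor at $p$, with the classical Heegner class over the relevant ring class field---express $\lvert\mathrm{char}(M_\infty)(\chi)\rvert_p$ through the order of the $\chi$-isotypic part of $\sha(E/K_n)[p^\infty]$, and $\lvert\mathrm{char}\bigl(\mathrm{Sel}_{\mathrm{Gr}}(K,\mathbf{T})/\Lambda\kappa^\infty_1\bigr)(\chi)\rvert_p$ through the index of that Heegner class in the $\chi$-isotypic part of the pertinent Mordell--Weil/Selmer group, in each case up to the same local correction factors at $p$ and at the primes dividing $N^-$. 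On the other hand, W.~Zhang's proof of Kolyvagin's conjecture ensures that the Kolyvagin system built from Heegner points over these ring class fields is nonzero, so Kolyvagin's structure theorem for Shafarevich--Tate groups applies and yields the exact identity relating $\#(\sha(E/K_n)[p^\infty])^\chi$ to the square of that Heegner index, carrying exactly the same local factors. Comparing the two descriptions gives $\lvert\mathrm{char}(M_\infty)(\chi)\rvert_p=\lvert\mathrm{char}\bigl(\mathrm{Sel}_{\mathrm{Gr}}(K,\mathbf{T})/\Lambda\kappa^\infty_1\bigr)(\chi)\rvert_p$, which by the previous paragraph completes the proof.

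The step I expect to be the real obstacle is not the Iwasawa-theoretic packaging but securing this finite-level input in the generality claimed in the introduction. One must run the control theorems for $\mathrm{Sel}_{\mathrm{Gr}}$ and for the Heegner class in parallel and track the local terms at $p$ and at the primes dividing $N^-$ so that they cancel against those in Kolyvagin's formula; and one must have W.~Zhang's theorem and Kolyvagin's structure theorem available both under the classical Heegner hypothesis $N^-=1$ and for non-squarefree $N$---exactly the cases excluded from earlier Iwasawa-theoretic treatments---which will likely demand revisiting the ramification hypotheses on the residual representation $\bar\rho_{E,p}$, extending the structure theorem beyond squarefree conductors, and checking that exceptional-zero phenomena at the primes above $p$ do not interfere. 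The explicit reciprocity law is the technical bridge relating Howard's $\Lambda$-adic Heegner class to the classical Heegner points on which Zhang's and Kolyvagin's theorems operate.
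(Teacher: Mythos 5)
Your strategy is essentially the one the paper uses: Howard's one-sided divisibility plus a single well-chosen specialization whose exact evaluation is supplied by W.~Zhang's theorem and Kolyvagin's structure theorem, then a commutative-algebra lifting step to promote the punctual equality to an equality of characteristic ideals. Your ``Weierstrass preparation on the ratio $h$'' argument is equivalent to the Skinner--Urban lifting lemma (Lemma~\ref{lem:3.2}) that the paper invokes, and in the rank-one case your distinguished character $\chi$ would simply be the trivial one, specializing to $K_0=K$ — exactly what the paper does.

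Where your route differs, and where I expect you would run into real friction, is that you propose to run the control theorem and reciprocity law directly on the Greenberg Selmer groups $\mathrm{Sel}_{\mathrm{Gr}}$ and on the $\Lambda$-adic Heegner class $\kappa_1^\infty$. The paper instead first proves (Theorem~\ref{thm:equiv-imc}) that Conjecture~\ref{conj:HPMC} is \emph{equivalent} to the Iwasawa--Greenberg main conjecture for the BDP $p$-adic $L$-function, which concerns the torsion Selmer group $X^{N^-}_{\emptyset,0}$ with the unbalanced local condition $(\emptyset,0)$ at the two primes above $p$. This is not merely a cosmetic reorganization: (i) since $X^{N^-}_{\emptyset,0}$ is $\Lambda$-cotorsion, its specialization at the trivial character has no exceptional-zero/rank-one issue, whereas $X=X_{\mathrm{Gr},\mathrm{Gr}}$ has $\Lambda$-rank one and one must first split off $X_{\mathrm{tors}}$ before specializing; (ii) the anticyclotomic control theorem one needs already exists in the literature precisely for the $(\emptyset,0)$-Selmer group (Jetchev--Skinner--Wan), and (iii) the explicit reciprocity law one needs is exactly the Bertolini--Darmon--Prasanna special value formula $\mathscr{L}_\mathfrak{p}^{BDP}(0)\sim(1-a_p+p)/p\cdot\log_{\omega_E}y_K$, which closes the circuit between the $p$-adic $L$-function, Heegner points, and $\sha$. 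Running the analogous control and reciprocity arguments directly for $\mathrm{Sel}_{\mathrm{Gr}}$ and $\kappa_1^\infty$ (as you sketch) would require redoing those local computations from scratch, with careful bookkeeping of Tamagawa and Euler factors at $p$ and at $\ell\mid N^-$, and you correctly flag this as the genuine obstacle. So: same skeleton, but the paper's detour through the BDP main conjecture is what makes the specialization step routine rather than a project in itself, and this step should be regarded as a central input rather than ``Iwasawa-theoretic packaging.''

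One more small point: you leave open whether to specialize at a nontrivial $\chi$ factoring through $\mathrm{Gal}(K_n/K)$; under the analytic rank-one hypothesis imposed in the theorem, the trivial character suffices and is what the paper uses. The possibility of nontrivial $\chi$ to drop the rank-one assumption is exactly the content of the paper's Remark~\ref{rem:higher-rank}, and there the needed generalizations of the Gross--Zagier formula and of Kolyvagin's structure theorem to higher layers are indeed less settled — again consistent with your closing caveats.
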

The third statement is a form of Iwasawa main conjecture involving zeta elements which similarly appears in other settings, notably \cite[Thm.~5.1]{rubin-appendix} and \cite[Conj.~12.10]{kato-euler-systems}.
Here, we assume that the Manin constant is 1 and that $\mathcal{O}^\times_K = \lbrace \pm 1 \rbrace$ for notational simplicity.
\subsection{Main result} 

Similarly as in \cite[Notations]{wei-zhang-mazur-tate}, we consider the following condition on the triple $(E,p,K)$:

\begin{assu}[Condition $\heartsuit$]\label{assu:heart}
	Denote by $\mathrm{Ram}(\overline{\rho})$ the set of primes $\ell$ dividing exactly $N$ such that the $G_{\mathbf{Q}}$-module $E[p]$ is ramified at $\ell$. Then:
	\begin{enumerate}
		\item $\mathrm{Ram}(\overline{\rho})$ contains all primes $\ell\Vert N^+$. 
		\item $\mathrm{Ram}(\overline{\rho})$ contains all primes  $\ell\vert N^-$. 
		\item If $N$ is not square-free, then $\#\mathrm{Ram}(\overline{\rho})\geqslant 1$, and either $\mathrm{Ram}(\overline{\rho})$ contains a prime $\ell\vert N^-$ or there are at least two primes $\ell\Vert N^+$.
		\item If $\ell^2\vert N^+$, then $\mathrm{H}^0(\mathbf{Q}_\ell, \overline{\rho}) = 0$.
	\end{enumerate}
\end{assu}

\begin{rem} \label{rem:conditionCR}
	This is a slight strengthening of Condition~$\heartsuit$ in \cite{wei-zhang-mazur-tate}, where in part (2) $E[p]$ is only required to be ramified at the primes $\ell\vert N^-$ with $\ell\equiv\pm{1}\pmod{p}$. 
\end{rem}

Under Condition~$\heartsuit$ (and other hypotheses recalled in Theorem~\ref{thm:indivisibility} below), W.~Zhang \cite{wei-zhang-mazur-tate} has recently obtained a proof of Kolyvagin's conjecture \cite{kolyvagin-selmer}. 
Concerning the nature of this conjecture, let us just mention here that it concerns the $p$-indivisibility of so-called derived Heegner classes on $E$, and as such it does not seem to have an Iwasawa-theoretic flavour. 

In this note, we shall build on W.~Zhang's result to prove the following theorem towards Conjecture~\ref{conj:HPMC}, where in addition to Assumptions~\ref{assu:gen_heeg} and \ref{assu:heart}, the following is in force:

\begin{assu}
	\label{assu:split}
	\hfill
	\begin{enumerate}
		\item $p = \mathfrak{p}\overline{\mathfrak{p}}$ splits in $K$.
\item $ \overline{\rho}: G_K\rightarrow\mathrm{Aut}_{\mathbf{F}_p}(E[p])$ is surjective.
	\end{enumerate}
\end{assu}

\begin{thm}[Main result]\label{thm:main}
Suppose that the triple $(E, K ,p)$ satisfies Assumptions \ref{assu:gen_heeg}, \ref{assu:heart}, and \ref{assu:split}, and assume in addition that $\mathrm{ord}_{s=1}L(E/K,s)=1$.
Then Conjecture~\ref{conj:HPMC} holds.
\end{thm}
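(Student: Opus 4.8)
The plan is to deduce the Heegner point main conjecture from the combination of Howard's divisibility (which gives one direction) and W.~Zhang's proof of Kolyvagin's conjecture (which will be leveraged to produce the reverse divisibility), using Kolyvagin's structure theorem for Shafarevich--Tate groups and the explicit reciprocity law as the bridge between the Iwasawa-theoretic and the descent-level objects. First I would recall Howard's results: under Assumptions~\ref{assu:gen_heeg} and \ref{assu:split}, $\mathrm{Sel}_{\mathrm{Gr}}(K,\mathbf{T})$ has rank one and $\mathrm{Sel}_{\mathrm{Gr}}(K,\mathbf{A})$ has corank one, the class $\kappa_1^\infty$ is non-torsion (Cornut--Vatsal), the dual $X$ satisfies the pseudo-isomorphism $X\sim\Lambda\oplus M_\infty\oplus M_\infty$ for a torsion module $M_\infty$ with $\mathrm{char}(M_\infty)=\mathrm{char}(M_\infty)^\iota$, and one has the divisibility $\mathrm{char}(M_\infty)\mid \mathrm{char}\bigl(\mathrm{Sel}_{\mathrm{Gr}}(K,\mathbf{T})/\Lambda\kappa_1^\infty\bigr)$. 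Thus parts (1) and (2) of Conjecture~\ref{conj:HPMC} are already known, and only the reverse divisibility in part (3) remains.

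For the reverse divisibility, the strategy is to convert Kolyvagin's conjecture into an equality of characteristic ideals via a comparison at each finite layer $K_n$. Kolyvagin's conjecture (as proven by W.~Zhang under Condition~$\heartsuit$ together with Assumption~\ref{assu:split}(2) and the rank hypothesis $\mathrm{ord}_{s=1}L(E/K,s)=1$, which via Gross--Zagier/Kolyvagin forces the base Heegner point to be non-torsion) asserts that the derived Heegner classes are as $p$-indivisible as possible; precisely, it pins down the exact $p$-divisibility of the system of Kolyvagin classes, and through Kolyvagin's structure theorem this is equivalent to an exact formula for the order of $\sha(E/K)[p^\infty]$ and for the index of the Heegner point, namely $\#\sha(E/K)[p^\infty]$ equals the square of the index of the Heegner point up to the Tamagawa-type factors. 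One then feeds these layer-by-layer orders into the fundamental exact sequence relating $\mathrm{Sel}_{p^m}(E/K_n)$, $E(K_n)\otimes\mathbf{Z}/p^m$, and $\sha(E/K_n)[p^m]$, and uses the pseudo-isomorphisms recalled in the introduction together with the explicit reciprocity law identifying the image of $\kappa_1^\infty$ under the relevant Perrin-Riou-type map with the Heegner class at each layer. Passing to the limit over $n$ and $m$ yields the opposite divisibility $\mathrm{char}\bigl(\mathrm{Sel}_{\mathrm{Gr}}(K,\mathbf{T})/\Lambda\kappa_1^\infty\bigr)\mid\mathrm{char}(M_\infty)$, which combined with Howard's divisibility gives the desired equality in part (3).

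The main obstacle I expect is the bookkeeping required to pass from W.~Zhang's statement --- which is formulated in terms of Kolyvagin classes and derived classes at a fixed level, with a possibly non-squarefree conductor and under the classical (rather than generalized) Heegner hypothesis --- to a clean Iwasawa-theoretic index formula that can be compared against $\mathrm{char}(M_\infty)$. Concretely, this requires (i) controlling the auxiliary local terms (Tamagawa numbers, the discrepancy between $\mathrm{Sel}_{\mathrm{Gr}}$ and the naive Selmer groups, and the constants appearing in Kolyvagin's structure theorem) uniformly along the tower so that they do not contribute to the characteristic ideal, and (ii) verifying that the strengthened Condition~$\heartsuit$ in Assumption~\ref{assu:heart} is exactly what is needed to apply W.~Zhang's theorem while also guaranteeing the input hypotheses of Howard's machinery and the non-vanishing of the relevant $\mu$-invariants or the torsion-freeness of the appropriate quotients. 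Once these technical points are settled, the equality follows formally from a divisibility in both directions of principal-to-divisible ideals in the Noetherian domain $\Lambda/\mathfrak{p}$ (or after inverting $p$), together with the self-duality $\mathrm{char}(M_\infty)=\mathrm{char}(M_\infty)^\iota$ to rule out the usual ambiguity by a power of $\iota$-stable elements.
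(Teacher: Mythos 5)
Your proposal correctly identifies the key ingredients (Howard's divisibility, W.~Zhang's proof of Kolyvagin's conjecture, Kolyvagin's structure theorem, the explicit reciprocity law) and correctly reduces parts (1) and (2) of Conjecture~\ref{conj:HPMC} to Howard's work. However, your strategy for closing the gap in part (3) is fundamentally different from the paper's, and as outlined it contains a genuine gap.

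You propose to ``convert Kolyvagin's conjecture into an equality of characteristic ideals via a comparison at each finite layer $K_n$,'' feeding layer-by-layer index formulas into the fundamental exact sequences and passing to the limit over $n$ and $m$. The problem is that W.~Zhang's theorem (Theorem~\ref{thm:indivisibility}) is a statement purely over the base field $K$: it proves the nonvanishing of the mod $p$ Kolyvagin classes $c_1(n)\in\mathrm{H}^1(K,E[p])$, and via Kolyvagin's structure theorem this yields the exact order formula $\mathrm{ord}_p(\#\sha(E/K)[p^\infty])=2\,\mathrm{ord}_p[E(K):\mathbf{Z}.y_K]$ \emph{only at level $K$}. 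There is no available analogue of this nonvanishing statement over the higher layers $K_n$ (or, equivalently, at non-trivial anticyclotomic characters $\chi$); the paper explicitly flags this as an open direction in Remark~\ref{rem:higher-rank}. So the ``layer-by-layer'' comparison you envision cannot get off the ground past $n=0$, and you acknowledge the issue as ``bookkeeping'' without providing a mechanism to resolve it.

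The paper's actual argument avoids this obstruction by a different architecture. First, Theorem~\ref{thm:equiv-imc} converts the Heegner point main conjecture into the equivalent Iwasawa--Greenberg main conjecture for the BDP $p$-adic $L$-function $\mathscr{L}^{BDP}_{\mathfrak{p}}$, so that Howard's divisibility becomes $(f_{\emptyset,0})\supseteq(\mathscr{L}^{BDP}_{\mathfrak{p}})^2$. Second, the anticyclotomic control theorem of Jetchev--Skinner--Wan and the $p$-adic Waldspurger formula (Theorem~\ref{thm:bdp}) translate the specialization of this putative equality at the \emph{single} trivial character into exactly the descent-level formula $[E(K):\mathbf{Z}.y_K]^2\sim_p\#\sha(E/K)[p^\infty]$, which is what W.~Zhang supplies. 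Third --- and this is the step your proposal is missing --- the Skinner--Urban lifting lemma (Lemma~\ref{lem:3.2}) then upgrades the one-sided divisibility of ideals plus the equality of their images modulo the augmentation ideal to a full equality of ideals in $\Lambda$. This lemma is precisely what lets one deduce the Iwasawa-theoretic equality from information at a single specialization, so there is no need to propagate W.~Zhang's result up the anticyclotomic tower. Without this lifting device (or a proof of indivisibility at all layers, which is not currently known), your passage ``to the limit over $n$ and $m$'' does not produce the reverse divisibility.
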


\subsection{Outline of the proof} 
After Perrin-Riou's work, the first results towards Conjecture~\ref{conj:HPMC} were due to Bertolini \cite{bertolini} and Howard \cite{howard-kolyvagin, howard-gl2-type}, which under mild hypotheses established one of the divisibilities predicted by the third statement in the conjecture. More precisely, adapting to the anticyclotomic setting the Kolyvagin system machinery of Mazur--Rubin \cite{mazur-rubin-book}, Howard constructed a \emph{$\Lambda$-adic Kolyvagin system}  $\boldsymbol{\kappa}^\infty$ whose base class $\kappa_{1}^\infty$ could be shown to be non-trivial by Cornut--Vatsal \cite{cornut-vatsal}, yielding a proof of all the statements in Conjecture~\ref{conj:HPMC} \emph{except} for the divisibility ``$\subseteq$'' in the third part. 

Later, the first cases of the full Conjecture~\ref{conj:HPMC} were obtained in \cite[Thm.~1.2]{wan-heegner} and \cite[Thm.~3.4]{castella-beilinson-flach}. These were obtained by building on X.~Wan's work \cite{wan-rankin-selberg}, which when combined with the reciprocity law for Heegner points \cite{cas-hsieh1} yields a proof of the missing divisibility ``$\subseteq$''. However, these results excluded the case $N^-=1$ (i.e. the ``classical'' Heegner hypothesis) and $N$ is assumed to be square-free. In contrast, our proof of Theorem~\ref{thm:main} is based on a  different idea, dispensing with the use of \cite{wan-rankin-selberg} and allowing for those excluded cases. Moreover, we expect the analytic rank $1$ hypothesis made in Theorem~\ref{thm:main} to not be essential to our method  (see Remark~\ref{rem:higher-rank}).

As alluded to above, Howard's results in \cite{howard-kolyvagin, howard-gl2-type} 
are based on the Mazur--Rubin machinery of Kolyvagin systems, suitably adapted to the anticyclotomic setting. As essentially known to Kolyvagin \cite{kolyvagin-selmer}, 
the upper bound  
provided by this machinery can be shown to be \emph{sharp} under a certain nonvanishing hypothesis;  in the framework of \cite{mazur-rubin-book}, this corresponds to the Kolyvagin system being \emph{primitive} \cite[Def.~4.5.5 and 5.3.9]{mazur-rubin-book}. 

Even though primitivity was not incorporated into Howard's 
treatment \cite{howard-kolyvagin, howard-gl2-type}\footnote{See \cite{howard-bipartite} however, esp. Theorem~3.2.3, 
	although we will have no use for any of the results in that paper.}, we shall upgrade his divisibility to an equality 
by building on W.~Zhang's proof of Kolyvagin's conjecture \cite{wei-zhang-mazur-tate}. In order to carry out this strategy, we consider a different anticyclotomic main conjecture. Under Assumption~\ref{assu:split},  Bertolini--Darmon--Prasanna \cite{bertolini-darmon-prasanna-duke} (as extended by Brooks \cite{brooks} for $N^-\neq 1$) have constructed a $p$-adic $L$-function $\mathscr{L}_{\mathfrak{p}}^{BDP}\in\Lambda^{\mathrm{ur}}:=\mathbf{Z}^{\mathrm{ur}}_p\hat\otimes\Lambda$, 
where $\mathbf{Z}^{\mathrm{ur}}_p$ is the completion of the maximal unramified extension of $\mathbf{Z}_p$, $p$-adically interpolating a \emph{square-root} of certain Rankin--Selberg $L$-values. A variant of Greenberg's main conjectures \cite{greenberg-motives} relates $\mathscr{L}_{\mathfrak{p}}^{BDP}$ to the characteristic ideal of an ``$N^-$-minimal'' anticyclotomic Selmer group
\[
\mathrm{Sel}^{N^-}_{\emptyset,0}(K,\mathbf{A})\subset \varinjlim\mathrm{H}^1(K_n,A)
\]
defined in \cite[$\S$2.3.4]{jetchev-skinner-wan} (and  recalled in Section~\ref{sec:Sel} below) which differs from $\mathrm{Sel}_{\mathrm{Gr}}(K,\mathbf{A})$ by the defining local conditions at the primes dividing $p$ (and possibly $N^-$):
\begin{conj}\label{conj:BDP}
	The Pontrjagin dual $X^{N^-}_{\emptyset,0}$ of $\mathrm{Sel}^{N^-}_{\emptyset,0}(K,\mathbf{A})$ is $\Lambda$-torsion, and we have
	\[
	\mathrm{char}(X^{N^-}_{\emptyset,0})\Lambda^{\mathrm{ur}}=(\mathscr{L}_{\mathfrak{p}}^{BDP})^2.
	\]
\end{conj}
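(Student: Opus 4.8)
The plan is to establish the equality of Conjecture~\ref{conj:BDP}, together with the $\Lambda$-torsionness of $X^{N^-}_{\emptyset,0}$, as a pair of opposite divisibilities: the inclusion $(\mathscr{L}_{\mathfrak{p}}^{BDP})^{2}\subseteq\mathrm{char}(X^{N^-}_{\emptyset,0})\Lambda^{\mathrm{ur}}$ coming from the Heegner point Euler system, and the reverse inclusion from W.~Zhang's proof of Kolyvagin's conjecture.

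For the Euler system inclusion I would begin with the $\Lambda$-adic Heegner class $\boldsymbol{\kappa}^{\infty}$, whose base class $\kappa^{\infty}_{1}\in\mathrm{Sel}_{\mathrm{Gr}}(K,\mathbf{T})$ is non-torsion by Cornut--Vatsal. The explicit reciprocity law of Bertolini--Darmon--Prasanna (extended to $N^{-}\neq1$ by Brooks), in the form of Castella--Hsieh, identifies the image of $\kappa^{\infty}_{1}$ under the relevant local map at $\mathfrak{p}$ with $\mathscr{L}_{\mathfrak{p}}^{BDP}$ up to a unit of $\Lambda^{\mathrm{ur}}$; in particular $\mathscr{L}_{\mathfrak{p}}^{BDP}\neq0$. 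Running Howard's anticyclotomic Kolyvagin system machinery on $\boldsymbol{\kappa}^{\infty}$ and then passing from the Greenberg Selmer structure to the ``$N^{-}$-minimal'' one by a Poitou--Tate global duality argument --- this is where the two sets of local conditions at the primes above $p$, and at those dividing $N^{-}$, are traded, and where the square appears: relaxing the condition at $\mathfrak{p}$ absorbs the free $\Lambda$-part and leaves the $M_{\infty}\oplus M_{\infty}$ of Conjecture~\ref{conj:HPMC}(2) --- yields that $X^{N^-}_{\emptyset,0}$ is $\Lambda$-torsion and that $\mathrm{char}(X^{N^-}_{\emptyset,0})$ divides $(\mathscr{L}_{\mathfrak{p}}^{BDP})^{2}$ in $\Lambda^{\mathrm{ur}}$.

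The reverse inclusion is the crux, and is where Assumption~\ref{assu:heart} and the analytic rank one hypothesis enter. Conceptually, W.~Zhang's theorem says exactly that the system of derived Heegner classes is \emph{primitive} in the sense of Mazur--Rubin, so that the ``primitive $\Rightarrow$ sharp'' principle for Kolyvagin systems upgrades the divisibility above to the asserted equality. To make this precise using only a statement of Kolyvagin's conjecture over $K$, I would argue by specialization at the trivial character $\mathbb{1}$: writing $(\mathscr{L}_{\mathfrak{p}}^{BDP})^{2}=\mathrm{char}(X^{N^-}_{\emptyset,0})\cdot\mathfrak{c}$ with $\mathfrak{c}\in\Lambda^{\mathrm{ur}}$ (legitimate by the first step, as $\mathrm{char}$ is a principal ideal), it suffices to prove $\mathfrak{c}(\mathbb{1})\in(\mathbf{Z}^{\mathrm{ur}}_{p})^{\times}$, since $\Lambda^{\mathrm{ur}}$ is local with maximal ideal $(p,\gamma-1)$ for a topological generator $\gamma$ of $\mathrm{Gal}(K_{\infty}/K)$. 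Under the rank one hypothesis, Gross--Zagier--Kolyvagin gives that $\sha(E/K)$ is finite, so a control theorem makes $X^{N^-}_{\emptyset,0}/(\gamma-1)X^{N^-}_{\emptyset,0}$ finite and $\mathrm{char}(X^{N^-}_{\emptyset,0})(\mathbb{1})$ compute its order up to a unit; W.~Zhang's theorem together with Kolyvagin's structure theorem for $\sha$ then determines that order \emph{exactly} in terms of the Heegner point $P_{K}\in E(K)$; and the Bertolini--Darmon--Prasanna formula (the boundary case of the reciprocity law) identifies $\mathscr{L}_{\mathfrak{p}}^{BDP}(\mathbb{1})$ with $\log_{\omega_{E}}(P_{K})$ up to an explicit $p$-adic multiplier. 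Comparing the two sides --- which amounts to the $p$-part of the Birch--Swinnerton-Dyer formula for $E/K$ --- then forces $\mathfrak{c}(\mathbb{1})$, and hence $\mathfrak{c}$, to be a unit.

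The main obstacle is exactly this comparison. Carrying it out requires keeping precise track of every local factor --- the Tamagawa numbers at the primes dividing $N$, the ratio of the Bertolini--Darmon--Prasanna period to the N\'eron period, the Manin constant (normalized to $1$), the $p$-part of $E(K)_{\mathrm{tors}}$, and the correction relating the ``$N^{-}$-minimal'' local conditions to the conditions underlying Kolyvagin's structure theorem --- so that the comparison yields an honest unit rather than a merely bounded quantity. One must also establish the control theorem identifying $X^{N^-}_{\emptyset,0}/(\gamma-1)X^{N^-}_{\emptyset,0}$ with the relevant finite-level Selmer group and check that $X^{N^-}_{\emptyset,0}$ has no nonzero finite $\Lambda$-submodule, so that $\mathrm{char}(X^{N^-}_{\emptyset,0})(\mathbb{1})$ really records $\#\bigl(X^{N^-}_{\emptyset,0}/(\gamma-1)X^{N^-}_{\emptyset,0}\bigr)$. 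Assumption~\ref{assu:heart} is what makes W.~Zhang's theorem, and hence the primitivity input, available in the present generality.
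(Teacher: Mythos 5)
Your proposal is correct and follows essentially the same route as the paper, just organized slightly differently. You aim directly at Conjecture~\ref{conj:BDP}, establishing Howard's divisibility translated through the Poitou--Tate/reciprocity-law dictionary (this is Theorem~\ref{thm:equiv-imc} in the paper), and then upgrading it to an equality by specializing at the trivial character and showing the cofactor $\mathfrak{c}$ is a unit of $\Lambda^{\mathrm{ur}}$ --- which is exactly the content of the Skinner--Urban lifting lemma (Lemma~\ref{lem:3.2}) that the paper invokes. Your specialization step is precisely the paper's combination of Corollary~\ref{cor:structure_of_sha} (W.~Zhang's indivisibility plus Kolyvagin's structure theorem for $\sha$) with Corollary~\ref{cor:equiv-spval} (the Jetchev--Skinner--Wan anticyclotomic control theorem plus the Bertolini--Darmon--Prasanna special value formula), and the bookkeeping of Tamagawa numbers, $E(K)_{\mathrm{tors}}$ and the finite-submodule issue that you flag is exactly what \cite{jetchev-skinner-wan} Thm.~3.3.1 and eq.~(3.5.d) handle in the paper's Theorem~\ref{thm:equiv-spval}. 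The only organizational difference is that the paper first proves Conjecture~\ref{conj:HPMC} (Theorem~\ref{thm:mainresult}) and then deduces Conjecture~\ref{conj:BDP} from the equivalence, whereas you go straight to Conjecture~\ref{conj:BDP}; since the equivalence of the two conjectures is an ingredient in both arguments, this is a presentational choice rather than a mathematical difference.
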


As a key step in our proof, in Section~\ref{sec:equiv-IMC} we establish the \emph{equivalence} between Conjecture~\ref{conj:HPMC} and Conjecture~\ref{conj:BDP}. In particular, we show that Howard's divisibility implies the divisibility ``$\supseteq$'' in Conjecture~\ref{conj:BDP}. In Section~\ref{sec:equiv-spval}, assuming
\begin{equation}\label{eq:alg-rank1}
\mathrm{rank}_{\mathbf{Z}}E(K)=1,
\end{equation}
by a useful commutative algebra lemma from \cite{skinner-urban} and the ``anticyclotomic control theorem'' of \cite{jetchev-skinner-wan},  we reduce the proof of the opposite divisibility 
to the proof of the equality
\begin{equation}\label{eq:exact-bound}
[E(K):\mathbf{Z}.P]^2=\#\sha(E/K)[p^\infty]\prod_{\ell\vert N^+}c_\ell^2
\end{equation}
up to a $p$-adic unit, where 
$P\in E(K)$ is a $p$-primitive generator 
of $E(K)$ up to torsion,  
and $c_\ell$ is the Tamagawa number of $E/\mathbf{Q}_\ell$. 
Under the hypotheses of Theorem~\ref{thm:main}, equalities $(\ref{eq:alg-rank1})$ and $(\ref{eq:exact-bound})$ follow from the Gross--Zagier formula  \cite{gross-zagier-original, yuan-zhang-zhang}, and the work of Kolyvagin \cite{kolyvagin-selmer}, and W.~Zhang \cite{wei-zhang-mazur-tate}, with $P\in E(K)$  given by the trace of a Heegner point defined over the Hilbert class field of $K$, yielding our main result. 


%

\begin{rem}\label{rem:higher-rank}
By the work of Cornut--Vatsal \cite{cornut-vatsal}, the Heegner points $y_n\in E(K_n)$ defined over the $n$-th layer of the anticyclotomic $\mathbf{Z}_p$-extension are non-torsion for $n$ sufficiently large. Taking one such $n$, and letting 
\[
y_{n,\chi}\in E(K_n)^\chi\subset E(K_n)\otimes_{\mathbf{Z}[\mathrm{Gal}(K_n/K)]}\mathbf{Z}[\chi]
\] 
be the image of $y_n$ in the $\chi$-isotypical component of for a primitive character $\chi:\mathrm{Gal}(K_n/K)\rightarrow\mathbf{Z}[\chi]^\times$, one can use Kolyvagin's methods (as extended in \cite{bertolini-darmon-crelles}) 
to establish the rank one property of $E(K_n)^\chi$, and the Gross--Zagier formula \cite{yuan-zhang-zhang} combined with a generalization of Kolyvagin's structure theorem  for Shafarevich--Tate groups \cite{kolyvagin-structure-sha} should yield an analogue of $(\ref{eq:exact-bound})$ in terms of the index of $y_{n,\chi}$.
\end{rem}


\section{Selmer structures}\label{sec:Sel}

We keep the notations from the Introduction. In particular, $E/\mathbf{Q}$ is an elliptic curve of conductor $N$ with good ordinary reduction at a prime $p>3$, and $K$ is an imaginary quadratic field of discriminant prime to $Np$ in which $p=\mathfrak{p}\overline{\mathfrak{p}}$ splits. Throughout the rest of this paper, we also fix once and for all a choice of complex and $p$-adic embeddings  $\mathbf{C}\overset{\iota_\infty}\hookleftarrow\overline{\mathbf{Q}}\overset{\iota_p}\hookrightarrow\overline{\mathbf{Q}}_p$.

Let $\Sigma$ be a finite set of places of $K$ including the places lying above $p$, $\infty$ and the primes dividing $N$. 
For a finite extension $F$ of $K$, let $F_{\Sigma}$ denote the maximal extension of $F$ unramified outside the places lying above $\Sigma$.
Following \cite{mazur-rubin-book}, given a Selmer structure $\mathcal{F} = \lbrace \mathcal{F}_w \rbrace_{w \mid v, v \in \Sigma}$ on a $G_K$-module $M$, we define the associated Selmer group $\mathrm{Sel}_{\mathcal{F}} (F, M)$ by
\[
\mathrm{Sel}_{\mathcal{F}} (F, M) := \mathrm{ker} \biggl( \mathrm{H}^1(F_{\Sigma}/F, M) \longrightarrow \prod_{w} \dfrac{\mathrm{H}^1(F_w, M)}{\mathrm{H}^1_{\mathcal{F}}(F_w, M)} \biggr).
\]

 If $M$ is a $G_K$-module and $L/K$ a finite Galois extension, we have the induced representation 
\[
\mathrm{Ind}_{L/K}M := \lbrace f : G_K \to M : f(\sigma x) = f(x)^{\sigma} \textrm{ for all } x \in G_K, \sigma \in G_L \rbrace,
\]
which is equipped with commuting actions of $G_K$ and $\mathrm{Gal}(L/K)$. Consider the modules   
\[
{\displaystyle \mathbf{T} := \varprojlim \left( \mathrm{Ind}_{K_n/K}T\right)}, 
\quad
{\displaystyle \mathbf{A} := \varinjlim \left( \mathrm{Ind}_{K_n/K} A\right)} \simeq \mathrm{Hom}(\mathbf{T}, \mu_{p^\infty}), 
\]
where the limits are with respect to the corestriction and restriction maps, respectively. These are finitely and cofinitely generated over $\Lambda$, respectively. 
 
We recall the ordinary filtrations at $p$. Let $G_{\mathbf{Q}_p}:=\mathrm{Gal}(\overline{\mathbf{Q}}_p/\mathbf{Q}_p)$, viewed as a decomposition group at $p$ inside $G_{\mathbf{Q}}$ via $\iota_p$. By $p$-ordinarity, there is a one-dimensional $G_{\mathbf{Q}_p}$-stable subspace $\mathrm{Fil}^+V\subset V$ such that the $G_{\mathbf{Q}_p}$-action on the quotient $\mathrm{Fil}^-V := V/\mathrm{Fil}^+V$ is unramified. Set
\[
\mathrm{Fil}^+T := T \cap \mathrm{Fil}^+V,\quad
\mathrm{Fil}^-T := T / \mathrm{Fil}^+T,\quad 
\mathrm{Fil}^+A :=  \mathrm{Fil}^+V / \mathrm{Fil}^+T,\quad \mathrm{Fil}^-A := A/\mathrm{Fil}^+A,
\]
and define the submodules $\mathrm{Fil}^+\mathbf{T}\subset\mathbf{T}$ and $\mathrm{Fil}^+\mathbf{A}\subset\mathbf{A}$ by
\[ 
\mathrm{Fil}^+\mathbf{T} := \varprojlim\mathrm{Ind}_{K_n/K}\mathrm{Fil}^+T,\quad 
\mathrm{Fil}^+\mathbf{A} := \varinjlim\mathrm{Ind}_{K_n/K}\mathrm{Fil}^+A,
\]
and set $\mathrm{Fil}^-\mathbf{T}:=\mathbf{T}/\mathrm{Fil}^+\mathbf{T}$ and $\mathrm{Fil}^-\mathbf{A}:=\mathbf{A}/\mathrm{Fil}^+\mathbf{A}$. 

Following the terminology introduced in \cite[$\S$2]{castella-beilinson-flach}, if $M$ denotes any of the $G_K$-modules above, we consider the following three local conditions at a place $v$ lying above $p$:
\begin{align*}
\mathrm{H}^1_{\emptyset}(F_v, M) & := \mathrm{H}^1(F_v, M), \\
\mathrm{H}^1_{\mathrm{Gr}}(F_v, M) & := \ker\left(\mathrm{H}^1(F_v, M) \longrightarrow \mathrm{H}^1(F_v, \mathrm{Fil}^- M )\right), \\
\mathrm{H}^1_{0}(F_v, M) & := 0.
\end{align*}
We also recall two local conditions at a place $v$ not lying above $p$:
\[
\xymatrix{
\mathrm{H}^1_{\mathrm{triv}}(F_v, M)  :=0, &
\mathrm{H}^1_{\mathrm{ur}}(F_v, M)  := \mathrm{H}^1(F_v/I_v,  M^{I_v} ) . 
}
\]
If $F = K$ and $M = \mathbf{A}$, then 
$\mathrm{H}^1_{\mathrm{triv}}(F_v, M) = \mathrm{H}^1_{\mathrm{ur}}(F_v, M)$ unless $v$ divides $N^-$ and $\overline{\rho}$ is unramified at $v$ as in \cite[Page 1362]{pw-mu}.

Using these local conditions, for $a, b \in \lbrace \emptyset, \mathrm{Gr}, 0 \rbrace$ we define
\[
\mathrm{Sel}_{a,b}(K, \mathbf{T}) 
:= \mathrm{ker} \biggr( 
\mathrm{H}^1(K_{\Sigma}/K, \mathbf{T}) \to \dfrac{\mathrm{H}^1(K_{\mathfrak{p}}, \mathbf{T})}{\mathrm{H}^1_{a}(K_{\mathfrak{p}}, \mathbf{T})} \times \dfrac{\mathrm{H}^1(K_{\overline{\mathfrak{p}}}, \mathbf{T})}{\mathrm{H}^1_{b}(K_{\overline{\mathfrak{p}}}, \mathbf{T})} \times \prod_{v \in \Sigma, v \nmid p} \dfrac{\mathrm{H}^1(K_v, \mathbf{T})}{\mathrm{H}^1_{\mathrm{ur}}(K_v, \mathbf{T})} \biggl).
\]
In particular, $\mathrm{Sel}_{\mathrm{Gr}}(K, \mathbf{T}):= \mathrm{Sel}_{\mathrm{Gr},\mathrm{Gr}}(K, \mathbf{T})$ coincides with the $\Lambda$-adic Selmer group of \cite[Def.~2.2.6]{howard-kolyvagin}, which we shall denote by $\mathrm{H}^1_{\mathcal{F}_\Lambda}(K,\mathbf{T})$ following the notation in \cite{howard-kolyvagin}. The same definitions and notational convention applies to $\mathbf{A}$.
Putting the trivial local condition at primes dividing $N^-$, 
we can also define the $N^-$-minimal variant of discrete Selmer group by
\[
\mathrm{Sel}^{N^-}_{a,b}(K, \mathbf{A}) 
:= \mathrm{ker} \biggr( \mathrm{Sel}_{a,b}(K, \mathbf{A}) \to  \prod_{v \mid N^-} \mathrm{H}^1_{\mathrm{ur}}(K_v, \mathbf{A})   \biggl) .
\]
\begin{rem}
If $v$ divides $N^-$ and $\overline{\rho}$ is unramified at $v$, then
$\mathrm{H}^1_{\mathrm{triv}}(K_v, \mathbf{A}) \neq \mathrm{H}^1_{\mathrm{ur}}(K_v, \mathbf{A})$ since $v$ splits completely in $K_{\infty}/K$. See \cite[Lem.~3.4]{pw-mu} for the exact difference.
Indeed, the $N^-$-minimal Selmer groups are practically preferred in the anticyclotomic Iwasawa theory for modular forms since 
the mod $p^n$ Selmer groups with the $N^-$-ordinary local condition \cite[$\S$3.2]{pw-mu}, which is used in the Euler system argument \`{a} la Bertolini-Darmon \cite{bertolini-darmon-imc-2005}, becomes the minimal Selmer group after taking the limit with respect to $n$ under the tame ramification condition described in Remark \ref{rem:conditionCR}. See \cite[Prop.~3.6]{pw-mu} for detail.
\end{rem}




\section{Heegner point Kolyvagin systems}

In this section we briefly recall the statement of Howard's theorems towards Conjecture~\ref{conj:HPMC}, as well as the results from Wei~Zhang's proof of Kolvyagin's conjecture 
that we shall use to upgrade Howard's divisibility to an equality.


Let 
\[
X := \mathrm{Hom}_{\mathbf{Z}_p}\left(\mathrm{Sel}_{\mathrm{Gr}}(K, \mathbf{A}) , \mathbf{Q}_p/\mathbf{Z}_p\right) 
\]
be the Pontrjagin dual of the $\Lambda$-adic Greenberg Selmer group. Since we shall not directly need it here, we refer the reader to \cite[\S{1.2}]{howard-kolyvagin} for the definition of a Kolyvagin system $\boldsymbol{\kappa}^\infty=\{\kappa_n^\infty\}_{n\in\mathcal{N}}$ (attached to a $G_K$-module $M$ together with a Selmer structure $\mathcal{F}$), where $n$ runs over the set of square-free products of certain primes inert in $K$, with the convention that $1\in\mathcal{N}$. 

\begin{thm}
\label{thm:howard}
Assume that $p>3$ is a good ordinary prime for $E$, $D_K$ is coprime to $pN$, and $\overline{\rho}$ is surjective. Let $\mathcal{F}_\Lambda$ be the Selmer structure for the Greenberg Selmer group. Then: 
\begin{enumerate}
\item There exists 
a $\Lambda$-adic Kolyvagin system $\ks^\infty$ for $(\mathbf{T},\mathcal{F}_\Lambda)$ with $\kappa^{\infty}_1 \neq 0$,
\item $\mathrm{Sel}_{\mathrm{Gr}}( K, \mathbf{T} )$ is a torsion-free, rank one $\Lambda$-module.
\item There is a torsion $\Lambda$-module $M_{\infty}$ such that $\mathrm{char}(M_{\infty}) = \mathrm{char}(M_{\infty})^{\iota}$ and a pseudo-isomorphism
\[
X \sim \Lambda \oplus M_{\infty} \oplus M_{\infty}.
\]
\item $\mathrm{char}(M_{\infty})$ divides $\mathrm{char}\bigl( \mathrm{Sel}_{\mathrm{Gr}}(K,\mathbf{T})/\Lambda \kappa^\infty_1)$. 
\end{enumerate}
\end{thm}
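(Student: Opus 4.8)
The plan is to follow Howard's adaptation \cite{howard-kolyvagin, howard-gl2-type} of the Mazur--Rubin Kolyvagin system machinery to the anticyclotomic Iwasawa-theoretic setting, together with the non-triviality theorem of Cornut--Vatsal \cite{cornut-vatsal}. First, for part (1), one constructs $\boldsymbol{\kappa}^\infty$ from the system of Heegner points over ring class fields. Under Assumption~\ref{assu:gen_heeg}, the CM points on the Shimura curve of level $\Gamma_0(N^+)$ attached to $B$ give, via modularity, Heegner points on $E$ over the ring class field of conductor $n$, for $n$ a square-free product of Kolyvagin primes (rational primes inert in $K$, prime to $Np$, with $\ell+1$ and $a_\ell(E)$ both divisible by a suitably large power of $p$); the $p$-ordinarity hypothesis lets one descend this tower along the anticyclotomic $\mathbf{Z}_p$-extension and assemble the norm-compatible Heegner points into a $\Lambda$-adic class $\kappa_1^\infty\in\mathrm{Sel}_{\mathrm{Gr}}(K,\mathbf{T})$, with Kolyvagin's derivative operators $D_n=\prod_{\ell\mid n}D_\ell$ producing derived classes $\kappa_n^\infty$ for $n\in\mathcal{N}$. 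The content is to verify that $\{\kappa_n^\infty\}_{n\in\mathcal{N}}$ satisfies the defining axioms of a $\Lambda$-adic Kolyvagin system for $(\mathbf{T},\mathcal{F}_\Lambda)$: that the local condition of $\kappa_n^\infty$ at each prime dividing $n$ is transverse, that the finite--singular comparison at a Kolyvagin prime $\ell\mid n$ is governed by the reciprocity laws relating Heegner points of conductor $n$ and $n\ell$, and that the Selmer structure $\mathcal{F}_\Lambda$ has core rank one---a global Euler-characteristic computation for the Greenberg local conditions in which the parity imposed by Assumption~\ref{assu:gen_heeg} is used. The non-vanishing $\kappa_1^\infty\neq 0$ is then immediate from Cornut--Vatsal \cite{cornut-vatsal}: $\kappa_1^\infty$ is the image under the Kummer map of the norm-compatible Heegner points $y_n\in E(K_n)$, which are non-torsion for $n$ large, so $\kappa_1^\infty$ is not even $\Lambda$-torsion.

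Next, for (2), torsion-freeness of $\mathrm{Sel}_{\mathrm{Gr}}(K,\mathbf{T})$ follows because surjectivity of $\overline{\rho}$ forces $\mathrm{H}^0(K_\infty,E[p^\infty])=0$, whence (as $\mathbf{T}$ is $\Lambda$-free) the $\Lambda$-torsion submodule of $\mathrm{H}^1(K_\Sigma/K,\mathbf{T})$, and a fortiori of $\mathrm{Sel}_{\mathrm{Gr}}(K,\mathbf{T})$, vanishes; rank one is the Kolyvagin-system bound in the core-rank-one case, the lower bound supplied by $\kappa_1^\infty\neq 0$ and the upper bound by the machinery.

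For (3) and (4), global (Poitou--Tate) duality---using that the Greenberg conditions at $\mathfrak{p}$ and $\overline{\mathfrak{p}}$ for $\mathbf{T}$ and $\mathbf{A}$ are exact orthogonal complements under local Tate duality, twisted by the involution $\iota$ coming from $\mathbf{A}\cong\mathrm{Hom}(\mathbf{T},\mu_{p^\infty})$---shows $X$ has $\Lambda$-corank one, so $X\sim\Lambda\oplus X_{\mathrm{tors}}$. Combining the Kolyvagin-system structure theorem for $\boldsymbol{\kappa}^\infty$ with the $\iota$-skew-Hermitian Cassels--Tate/Flach pairing on $X_{\mathrm{tors}}$, which being alternating and non-degenerate modulo pseudo-null modules forces $\mathrm{char}(X_{\mathrm{tors}})$ to be a square and $X_{\mathrm{tors}}$ to split (up to pseudo-isomorphism) as $M_\infty\oplus M_\infty$ with $\mathrm{char}(M_\infty)^\iota=\mathrm{char}(M_\infty)$, gives (3). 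Finally, the derived classes $\kappa_n^\infty$ with $n$ of growing number of prime factors bound the torsion of $X$ in terms of the index of the base class in the rank-one module $\mathrm{Sel}_{\mathrm{Gr}}(K,\mathbf{T})$; after removing the free part $\Lambda$ one obtains $\mathrm{char}(M_\infty)\mid\mathrm{char}\bigl(\mathrm{Sel}_{\mathrm{Gr}}(K,\mathbf{T})/\Lambda\kappa_1^\infty\bigr)$, which is (4) and is the anticyclotomic analogue of Kolyvagin's bound on $\sha$ in terms of the index of a Heegner point.

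The main obstacle I anticipate is in the first step: promoting the individual derived Heegner classes to a genuine $\Lambda$-adic Kolyvagin system requires the full strength of the reciprocity laws relating Heegner points of varying conductor and a careful analysis of the local conditions at the Kolyvagin primes, and the core-rank-one computation for $\mathcal{F}_\Lambda$ is delicate; the construction of the skew-Hermitian pairing underlying the $M_\infty\oplus M_\infty$ splitting is the other technically substantial ingredient.
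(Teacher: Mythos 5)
Your sketch accurately reconstructs the proof that the paper simply cites: the result is Theorem~2.2.10 of \cite{howard-kolyvagin}, extended to $N^-\neq 1$ in Theorem~3.4.2 of \cite{howard-gl2-type}, with the non-triviality of $\kappa_1^\infty$ supplied by Cornut--Vatsal \cite{cornut-vatsal}, exactly as you say. The ingredients you outline---assembling derived Heegner classes into a $\Lambda$-adic Kolyvagin system, verifying core rank one for $\mathcal{F}_\Lambda$, the skew-Hermitian pairing forcing the $M_\infty\oplus M_\infty$ splitting, and Cornut--Vatsal for non-vanishing of the base class---are precisely the contents of those references, so your proposal takes the same route.
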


\begin{proof}
This is \cite[Thm.~2.2.10]{howard-kolyvagin}, as extended in \cite[Thm.~3.4.2]{howard-gl2-type} to the case $N^-\neq 1$. The non-triviality of $\kappa_1^\infty$ follows from the work of Cornut--Vatsal \cite{cornut-vatsal}.
\end{proof}



Following \cite{wei-zhang-mazur-tate}, we say that a prime $\ell$ is called a \emph{Kolyvagin prime} if $\ell$ is prime to $pND_K$, inert in $K$, and the index
\[
M(\ell):=\mathrm{min}_p\{v_p(\ell+1),v_p(a_\ell)\}
\] 
is strictly positive, where $a_\ell=\ell+1-\#E(\mathbf{F}_\ell)$. Let 
\[
\delta_w:E(F_w)\otimes_{\mathbf{Z}}\mathbf{Z}_p\longrightarrow \mathrm{H}^1(F_w,T)
\]
be the local Kummer map, and let $\mathcal{F}$ be the Selmer structure on $T$ given by $\mathrm{H}^1_{\mathcal{F}}(F_w,T):=\mathrm{im}(\delta_w)$. As explained in \cite[\S{1.7}]{howard-kolyvagin} (and its extension in \cite[\S{2.3}]{howard-gl2-type} to $N^-\neq 1$), Heegner points give rise to a (mod $p^M$) Kolyvagin system
\begin{equation}\label{eq:k-infty}
\ks =\left\{ \kappa_n = c_M(n)\in \mathrm{H}^1(K,E[p^M]):0<M\leqslant M(n),\;  n\in\mathcal{N}\right\}\nonumber
\end{equation}
for $(T/p^M T,\mathcal{F})$, where $\mathcal{N}$ denotes the set of square-free products of Kolyvagin primes, and for $n\in\mathcal{N}$ we set $M(n):=\min\{M(\ell):\ell\vert n\}$, with $M(1)=\infty$ by convention.

\begin{thm}
\label{thm:indivisibility}
Assume that: 
\begin{itemize}
	\item $p>3$ is a good ordinary prime for $E$,
	\item $D_K$ is coprime to $pN$,
	\item Condition~$\heartsuit$ holds for $(E,p,K)$,
	\item $G_K\rightarrow\mathrm{Aut}_{\mathbf{F}_p}(E[p])$ is surjective.
\end{itemize}
Then the collection of mod $p$ cohomology classes
\begin{equation}\label{eq:mod-p}
\ks = \{\kappa_n = c_1(n)\in \mathrm{H}^1(K,E[p]):n\in\mathcal{N}\}
\end{equation}
is nonzero. In particular, $\kappa_n \neq 0$ for some $n$.  
\end{thm}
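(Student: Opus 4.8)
The plan is to obtain Theorem~\ref{thm:indivisibility} as a direct consequence of W.~Zhang's proof of Kolyvagin's conjecture in \cite{wei-zhang-mazur-tate}; the only point requiring attention is to check that the hypotheses imposed in the statement imply those required in \emph{loc.~cit.}

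First I would observe that the assertion is precisely Kolyvagin's conjecture \cite{kolyvagin-selmer}: namely, that the system of mod $p$ derived Heegner classes $\ks=\{c_1(n):n\in\mathcal{N}\}$ is not identically zero. This is exactly what \cite{wei-zhang-mazur-tate} establishes, under the running hypotheses that $p>3$ be a good ordinary prime for $E$, that $D_K$ be coprime to $pN$, that $\overline{\rho}|_{G_K}$ be surjective, and under a version of Condition $\heartsuit$. The first three of these match the bulleted hypotheses verbatim.

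It therefore remains to reconcile the hypothesis on $\overline{\rho}$ at the primes dividing $N$. As recorded in Remark~\ref{rem:conditionCR}, Assumption~\ref{assu:heart} is a strengthening of the condition used in \cite{wei-zhang-mazur-tate}: the sole difference lies in part (2), where \emph{loc.~cit.} only requires $E[p]$ to be ramified at the primes $\ell\mid N^-$ with $\ell\equiv\pm 1\pmod p$, whereas Assumption~\ref{assu:heart}(2) requires ramification at \emph{every} $\ell\mid N^-$. Since parts (1), (3) and (4) are imposed exactly as in \cite{wei-zhang-mazur-tate}, Assumption~\ref{assu:heart} implies Zhang's condition, and the theorem follows upon invoking \cite{wei-zhang-mazur-tate}.

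For orientation I would also indicate the shape of the argument in \cite{wei-zhang-mazur-tate}, which is where all of the difficulty resides and which we do not reproduce: one argues by an induction controlling the smallest $n\in\mathcal{N}$ with $c_1(n)\neq 0$, using Ribet-style level-raising congruences to pass between $E$ and auxiliary modular forms with varying factorization $N=N^+N^-$, together with the explicit reciprocity laws of Bertolini--Darmon comparing the finite--singular images of Heegner classes with congruent anticyclotomic $L$-values; the supposition that the entire collection $\{c_1(n)\}_{n\in\mathcal{N}}$ vanishes modulo $p$ is then contradicted by the non-vanishing of these $L$-values, ultimately traceable to the non-vanishing of the relevant toric periods. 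The main obstacle is precisely the delicate tracking of the $p$-adic valuations of the derived classes through this chain of congruences — but this is entirely internal to \cite{wei-zhang-mazur-tate}, and the present note uses only the resulting statement.
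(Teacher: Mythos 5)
Your proposal matches the paper's argument exactly: the paper's proof of Theorem~\ref{thm:indivisibility} is simply a citation of \cite[Thm.~9.3]{wei-zhang-mazur-tate}, and the hypothesis reconciliation you carry out (Assumption~\ref{assu:heart} strengthening Zhang's Condition~$\heartsuit$) is precisely the content of the paper's Remark~\ref{rem:conditionCR}. The sketch of Zhang's level-raising induction is accurate background but, as you note, not part of what needs to be supplied here.
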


\begin{proof}
This is \cite[Thm.~9.3]{wei-zhang-mazur-tate}.
\end{proof}

\begin{rem}
In the terminology of 
\cite{mazur-rubin-book}, Wei Zhang's Theorem~\ref{thm:indivisibility} may be interpreted as establishing the \emph{primitivity}  
of the system $\ks$.  
Mazur--Rubin also introduced the (weaker) notion of $\Lambda$-\emph{primitivity} for the cyclotomic analogue of $\ks^{\infty}$ (see \cite[Def.~5.3.9]{mazur-rubin-book}), and in some sense our main result in this paper may be seen as a realization of the implications
\[
\textrm{$\ks$ is primitive}\;\Longrightarrow\;
\textrm{$\boldsymbol{\kappa}^\infty$ is $\Lambda$-primitive}\;\Longrightarrow\;
\textrm{Conjecture~\ref{conj:HPMC} holds},
\]
where $\boldsymbol{\kappa}^\infty$ is Howard's Heegner point $\Lambda$-adic Kolyvagin system from Theorem~\ref{thm:howard}. 
\end{rem}

Combined with Kolyvagin's work, Theorem~\ref{thm:indivisibility} yields the following exact formula the order of $\#\sha(E/K)[p^\infty]$ that we shall need.

\begin{cor}
	\label{cor:structure_of_sha}
Let the hypotheses be as in Theorem \ref{thm:indivisibility}. If $\mathrm{ord}_{s=1}L(E/K,s)=1$, then
\[
\mathrm{ord}_p(\#\sha(E/K)[p^\infty]) = 2 \cdot \mathrm{ord}_p [E(K):\mathbf{Z}.y_K]
\]
where $y_K\in E(K)$ is a Heegner point.
\end{cor}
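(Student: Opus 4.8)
The plan is to read off the formula from W.~Zhang's non‑vanishing result together with Kolyvagin's structure theorem, using Gross--Zagier only to guarantee that the Heegner point is of infinite order. First I would note that under Assumption~\ref{assu:gen_heeg} the sign of the functional equation of $L(E/K,s)$ equals $-1$, so that the hypothesis $\mathrm{ord}_{s=1}L(E/K,s)=1$ is equivalent to $L'(E/K,1)\neq 0$; by the Gross--Zagier formula \cite{gross-zagier-original,yuan-zhang-zhang} (in the form valid on the Shimura curve attached to $B$ when $N^-\neq 1$) this forces the Heegner point $y_K$ to be non‑torsion, hence $\mathrm{rank}_{\mathbf{Z}}E(K)\geqslant 1$. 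Kolyvagin's theorem \cite{kolyvagin-mw-sha}, applied to the non‑torsion class $y_K$ and using the running hypotheses on $\overline{\rho}$ and $p$, then gives $\mathrm{rank}_{\mathbf{Z}}E(K)=1$ and the finiteness of $\sha(E/K)$. In particular $[E(K):\mathbf{Z}.y_K]$ is finite; since $\overline{\rho}|_{G_K}$ is surjective and $p>3$ we have $p\nmid\#E(K)_{\mathrm{tors}}$, so $\mathrm{ord}_p[E(K):\mathbf{Z}.y_K]$ equals the $p$‑divisibility index of the Kummer image of $y_K$ inside $\mathrm{H}^1_f(K,T)\cong E(K)\otimes\mathbf{Z}_p$, the isomorphism using the finiteness of $\sha(E/K)[p^\infty]$.

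With rank one and finiteness of $\sha(E/K)[p^\infty]$ in hand, the Cassels--Tate pairing makes $\sha(E/K)[p^\infty]$ of the form $\mathfrak{G}\oplus\mathfrak{G}$, so that $\mathrm{ord}_p\#\sha(E/K)[p^\infty]=2\,\mathrm{length}_{\mathbf{Z}_p}\mathfrak{G}$, and it suffices to prove $\mathrm{length}_{\mathbf{Z}_p}\mathfrak{G}=\mathrm{ord}_p[E(K):\mathbf{Z}.y_K]$. This is where Kolyvagin's structure theorem \cite{kolyvagin-structure-sha} enters: it expresses $\mathrm{length}_{\mathbf{Z}_p}\mathfrak{G}$ as the difference between the $p$‑divisibility index of the Heegner class $c_M(1)$ and the minimal $p$‑divisibility index $\mathcal{M}_\infty:=\min_{n\in\mathcal{N}}\mathrm{ord}_p(c_M(n))$ among the derived Heegner classes. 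The content of Theorem~\ref{thm:indivisibility} (i.e.\ the primitivity of $\ks$) is precisely that some $c_1(n)$ is nonzero, that is $\mathcal{M}_\infty=0$; feeding this into the structure theorem collapses it to $\mathrm{ord}_p\#\sha(E/K)[p^\infty]=2\,\mathrm{ord}_p[E(K):\mathbf{Z}.y_K]$, which is the claim. A priori local Tamagawa‑type factors enter this comparison, but they are immaterial here: under Condition~$\heartsuit$, for every $\ell\Vert N$ the module $E[p]$ is ramified at $\ell$, forcing $p\nmid c_\ell$, while for $\ell^2\mid N^+$ the reduction is additive and $c_\ell\leqslant 4<p$, so all the relevant local factors are $p$‑adic units.

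I expect the main obstacle to be this last step: invoking Kolyvagin's structure theorem with the normalizations appropriate to the present generality---Heegner points arising from the Shimura curve for $B$, the case $N^-\neq 1$, and possibly non‑square‑free $N$---and making precise the translation between W.~Zhang's mod $p$ non‑vanishing and the vanishing of the Kolyvagin number $\mathcal{M}_\infty$. Once that dictionary is in place the statement follows; compare the arithmetic applications derived in \cite{wei-zhang-mazur-tate}.
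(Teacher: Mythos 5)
Your proposal is correct and follows essentially the same route as the paper: Gross--Zagier to guarantee $y_K$ is of infinite order, Kolyvagin's structure theorem for $\sha(E/K)[p^\infty]$, and W.~Zhang's Theorem~\ref{thm:indivisibility} to force the Kolyvagin invariant $\mathcal{M}_\infty$ to vanish so that the structure theorem collapses to the stated equality. The only difference is that you spell out intermediate steps (the Cassels--Tate decomposition, identification of $\mathrm{ord}_p$ of the index with a divisibility index, and a remark on Tamagawa factors) that the paper compresses into a one-sentence citation of \cite{kolyvagin-structure-sha} and \cite{wei-zhang-mazur-tate}.
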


\begin{proof} 
After Theorem~\ref{thm:indivisibility} (more precisely, the non-vanishing of (\ref{eq:mod-p})), this follows from Kolyvagin's structure theorem from $\sha(E/K)$ \cite{kolyvagin-structure-sha} 
(see also \cite{mccallum-kolyvagin}), 
using that $y_K$ has infinite order by the Gross--Zagier formula \cite{gross-zagier-original, yuan-zhang-zhang} (\emph{cf.} \cite[Thm.~10.2]{wei-zhang-mazur-tate}).	
\end{proof}

\section{Equivalent main conjectures}\label{sec:equiv-IMC}

In this section we establish the equivalence between Conjecture~\ref{conj:HPMC} (the Heegner point main conjecture) and Conjecture~\ref{conj:BDP} (the Iwawawa--Greenberg main conjecture for $\mathscr{L}_{\mathfrak{p}}^{BDP}$) in the Introduction. 

To ease the notation, for $a, b\in\{\emptyset,\mathrm{Gr},0\}$ we let $X_{a,b}$ denote the Pontrjagin dual of the generalized Selmer group $\mathrm{Sel}_{a,b}(K,\mathbf{A})$, keeping the earlier convention that $X:=X_{\mathrm{Gr},\mathrm{Gr}}$. 

\begin{thm}\label{thm:equiv-imc}
	Suppose $E[p]$ is ramified at all primes $\ell\vert N^-$. Then 
	Conjectures~\ref{conj:HPMC} and \ref{conj:BDP} are equivalent. More precisely, $X$ has $\Lambda$-rank $1$ if and only $X_{\emptyset,0}$ is $\Lambda$-torsion, and one-sided divisibility holds in Conjecture~\ref{conj:HPMC}(3) if and only if the same divisibility holds in Conjecture~\ref{conj:BDP}.  
\end{thm}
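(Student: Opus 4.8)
The plan is to compare the two main conjectures by relating the Selmer groups $\mathrm{Sel}_{\mathrm{Gr},\mathrm{Gr}}(K,\mathbf{A})$ and $\mathrm{Sel}^{N^-}_{\emptyset,0}(K,\mathbf{A})$ through a chain of Poitou--Tate/Greenberg-style comparisons, controlling the change of local conditions at $\mathfrak{p}$, $\overline{\mathfrak{p}}$, and the primes dividing $N^-$. First I would record the global Euler characteristic formula and Greenberg's ``dual Selmer'' duality: for $a,b\in\{\emptyset,\mathrm{Gr},0\}$ the Pontrjagin dual $X_{a,b}$ of $\mathrm{Sel}_{a,b}(K,\mathbf{A})$ and the (Iwasawa-adic) Selmer group $\mathrm{Sel}_{a^\perp,b^\perp}(K,\mathbf{T})$ attached to the orthogonal complement local conditions are related, with $\emptyset^\perp=0$, $0^\perp=\emptyset$, and $\mathrm{Gr}^\perp=\mathrm{Gr}$ (up to the twist built into $\mathbf{A}\simeq\mathrm{Hom}(\mathbf{T},\mu_{p^\infty})$); this is the mechanism that will turn a statement about $X_{\emptyset,0}$ into a statement about $\mathrm{Sel}_{0,\emptyset}(K,\mathbf{T})$. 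Because $\mathrm{Fil}^-\mathbf{T}$ is unramified at $\mathfrak{p}$ and $\overline{\mathfrak{p}}$ and $\mathbf{T}$, $\mathbf{A}$ are cofinitely/finitely generated over $\Lambda$, the local cohomology groups $\mathrm{H}^1(K_{\mathfrak{p}},-)$, $\mathrm{H}^1(K_{\overline{\mathfrak{p}}},-)$ and the quotients appearing when one enlarges or shrinks the condition from $\mathrm{Gr}$ to $\emptyset$ or $0$ are themselves finitely generated $\Lambda$-modules whose ranks and characteristic ideals can be computed explicitly via local Tate duality and the unramifiedness of $\mathrm{Fil}^-$. Doing this bookkeeping at $\mathfrak{p}$ and $\overline{\mathfrak{p}}$ separately is what produces the precise numerology ``$\Lambda$-rank $1$ for $X$ $\Longleftrightarrow$ $\Lambda$-torsion for $X_{\emptyset,0}$''.

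The key steps, in order, would be: (i) prove that $X_{\mathrm{Gr},\mathrm{Gr}}$ and $X_{\emptyset,0}$ differ, after comparing with the corresponding $\mathbf{T}$-side Selmer groups, by exactly the rank-one local contribution coming from $\mathrm{H}^1(K_{\mathfrak{p}},\mathrm{Fil}^-\mathbf{A})$-type terms — here is where the analogue of the Jetchev--Skinner--Wan / Castella ``$\S 2$'' computations enters, showing the relevant cokernels are $\Lambda$-torsion and pinning down their characteristic ideals; (ii) invoke the hypothesis that $E[p]$ is ramified at all $\ell\mid N^-$ to show that the $N^-$-minimal modification (imposing $\mathrm{H}^1_{\mathrm{triv}}$ versus $\mathrm{H}^1_{\mathrm{ur}}$ at those primes) changes nothing — i.e. $\mathrm{Sel}^{N^-}_{\emptyset,0}(K,\mathbf{A})=\mathrm{Sel}_{\emptyset,0}(K,\mathbf{A})$ — because ramification of $\overline{\rho}$ at $v\mid N^-$ forces $\mathbf{A}^{I_v}$ to be small enough that the two local conditions coincide (the remark after the Selmer definitions already flags that these differ \emph{only} when $\overline\rho$ is unramified at $v$); (iii) assemble (i) and (ii) with the duality of step (i) to get the rank/corank equivalence; (iv) finally, upgrade this to the equivalence of the divisibilities: since each modification of local conditions contributes a \emph{known} characteristic ideal (a product of local Euler-like factors, units up to the relevant places), the characteristic ideal of $X_{\mathrm{Gr},\mathrm{Gr}}/(\text{rank-one part})$ and that of $X_{\emptyset,0}$ differ by an explicit factor that matches the ratio between $\mathrm{char}(\mathrm{Sel}_{\mathrm{Gr}}(K,\mathbf{T})/\Lambda\kappa_1^\infty)$ and $(\mathscr{L}_{\mathfrak{p}}^{BDP})^2$ predicted by the reciprocity law; running the same comparison in both directions shows ``$\subseteq$'' on one side is equivalent to ``$\subseteq$'' on the other.

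The main obstacle I expect is step (i): correctly and cleanly identifying the local terms at $\mathfrak{p}$ and $\overline{\mathfrak{p}}$ that separate the Greenberg Selmer group from the $(\emptyset,0)$-Selmer group, and proving that the transition cokernels are $\Lambda$-torsion with the right characteristic ideal, without accidentally losing or gaining a factor. This requires care because the $\Lambda$-adic local cohomology at $p$ is not finite, only finitely generated, so one must track free ranks and torsion parts simultaneously; one must also make sure the argument is uniform whether or not $N^-=1$ and whether or not $N$ is square-free, and that no hypothesis from Assumption~\ref{assu:heart} beyond ramification at $N^-$ is secretly needed. A secondary subtlety is ensuring the duality in the first paragraph is applied with the correct local conditions at the bad primes away from $p$ (the $\mathrm{ur}$ versus $\mathrm{triv}$ issue), so that the $N^-$-minimal bookkeeping of step (ii) is consistent on both the $\mathbf{T}$- and $\mathbf{A}$-sides.
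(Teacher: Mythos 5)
Your overall strategy does mirror the paper's: both arguments use (a) the equality $\mathrm{Sel}^{N^-}_{a,b}(K,\mathbf{A})=\mathrm{Sel}_{a,b}(K,\mathbf{A})$ forced by ramification of $E[p]$ at $N^-$, (b) Poitou--Tate global duality comparing the Selmer structures $(\mathrm{Gr},\mathrm{Gr})$ and $(\emptyset,0)$ by tracking the change of local conditions at $\mathfrak{p}$ and $\overline{\mathfrak{p}}$, and (c) the explicit reciprocity law to bridge the Heegner class and $\mathscr{L}_{\mathfrak{p}}^{BDP}$. The paper packages (b) and (c) via the appendix lemmas of \cite{castella-beilinson-flach} (Lemmas A.3, A.4 and Theorem A.1), which you would have to reconstruct, but the skeleton is the same.

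However, step (iv) of your plan contains a mischaracterization that would derail the argument if taken literally. You assert that $\mathrm{char}(X_{\emptyset,0})$ and $\mathrm{char}(X_{\mathrm{tors}})$ ``differ by an explicit factor'' given by ``a product of local Euler-like factors, units up to the relevant places.'' This is not so: the Poitou--Tate comparison gives, for each height-one prime $\mathfrak{P}$,
\[
\mathrm{length}_{\mathfrak{P}}(X_{\emptyset,0})=\mathrm{length}_{\mathfrak{P}}(X_{\mathrm{tors}})+2\,\mathrm{length}_{\mathfrak{P}}\bigl(\mathrm{coker}(\mathrm{loc}_{\mathfrak{p}})\bigr),
\]
where $\mathrm{loc}_{\mathfrak{p}}:\mathrm{Sel}_{\mathrm{Gr}}(K,\mathbf{T})\to \mathrm{H}^1_{\mathrm{Gr}}(K_{\mathfrak{p}},\mathbf{T})$ is the global-to-local restriction. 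The cokernel of $\mathrm{loc}_{\mathfrak{p}}$ is a genuinely global object whose characteristic ideal is \emph{not} a priori computable as a product of local Euler factors, and no purely local bookkeeping pins it down. The point of the reciprocity law (Theorem A.1 in \cite{castella-beilinson-flach}) is precisely that it produces a matching identity
\[
\mathrm{ord}_{\mathfrak{P}'}(\mathscr{L}_{\mathfrak{p}}^{BDP})=\mathrm{length}_{\mathfrak{P}'}\bigl(\mathrm{coker}(\mathrm{loc}_{\mathfrak{p}})\Lambda^{\mathrm{ur}}\bigr)+\mathrm{length}_{\mathfrak{P}'}\biggl(\frac{\mathrm{Sel}_{\mathrm{Gr}}(K,\mathbf{T})\Lambda^{\mathrm{ur}}}{\Lambda^{\mathrm{ur}}\kappa_1^\infty}\biggr),
\]
so that upon squaring the second identity and subtracting, the unknown $\mathrm{coker}(\mathrm{loc}_{\mathfrak{p}})$ contributions \emph{cancel} and the two divisibilities become equivalent length inequalities. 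You do invoke ``the reciprocity law,'' but only as a final consistency check; in the paper's proof it is the engine that makes the unknown term disappear. You should also note that the reciprocity law and the nonvanishing of $\mathscr{L}_{\mathfrak{p}}^{BDP}$ are already needed in the rank/torsion direction (to see that $\mathrm{coker}(\mathrm{loc}_{\mathfrak{p}})$ is $\Lambda$-torsion in the converse implication), not only in the divisibility upgrade.
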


\begin{proof}
	This is essentially shown in the Appendix of \cite{castella-beilinson-flach} (\emph{cf.} \cite[\S{3.3}]{wan-heegner}); all the references in the proof that follows are to results in that paper.
     First, note that $\mathrm{Sel}_{a,b}(K,\mathbf{A}) = \mathrm{Sel}^{N^-}_{a,b}(K,\mathbf{A})$ by assumption.	
	 If $X$ has $\Lambda$-rank $1$, then $\mathrm{Sel}_{\mathrm{Gr}}(K,\mathbf{T})$ has $\Lambda$-rank $1$ by Lemma~2.3(1), and hence  $X_{\emptyset,0}$ is $\Lambda$-torsion by Lemma~A.4. Conversely, assume that $X_{\emptyset,0}$ is $\Lambda$-torsion. Then $X_{\mathrm{Gr},0}$ is also $\Lambda$-torsion (see eq.~(A.7)), and so $X_{\mathrm{Gr},\emptyset}$ has $\Lambda$-rank $1$ by Lemma~2.3(2). Now, global duality yields the exact sequence
	\begin{equation}\label{eq:PT}
	0\longrightarrow\mathrm{coker}(\mathrm{loc}_{\mathfrak{p}})\longrightarrow
	X_{\emptyset,\mathrm{Gr}}\longrightarrow X\longrightarrow 0,
	\end{equation}
	where $\mathrm{loc}_{\mathfrak{p}}:\mathrm{Sel}_{\mathrm{Gr}}(K,\mathbf{T})\rightarrow \mathrm{H}^1_{\mathrm{Gr}}(K_{\mathfrak{p}},\mathbf{T})$ is the restriction map. Since $\mathrm{H}^1_{\mathrm{Gr}}(K_{\mathfrak{p}},\mathbf{T})$ has $\Lambda$-rank $1$, the leftmost term in $(\ref{eq:PT})$ is $\Lambda$-torsion by Theorem~A.1 and the nonvanishing of $\mathscr{L}_\mathfrak{p}^{BDP}$ (see Theorem~1.5); since $X_{\mathrm{Gr},\emptyset}\simeq X_{\emptyset,\mathrm{Gr}}$ by the action of complex conjugation, we  conclude from $(\ref{eq:PT})$ that $X$ has $\Lambda$-rank $1$. 
	
	Next, assume that $X$ has $\Lambda$-rank $1$. By Lemma~2.3(1), this amounts to the assumption that $\mathrm{Sel}_{\mathrm{Gr}}(K,\mathbf{T})$ has $\Lambda$-rank $1$, and so by Lemmas~A.3 and A.4 for every height one prime $\mathfrak{P}$ of $\Lambda$ we have
	\begin{equation}\label{eq:A3}
	\mathrm{length}_{\mathfrak{P}}(X_{\emptyset,0})=\mathrm{length}_{\mathfrak{P}}(X_{\mathrm{tors}})+2\;\mathrm{length}_{\mathfrak{P}}(\mathrm{coker}(\mathrm{loc}_{\mathfrak{p}})),
	\end{equation} 
	where $X_{\mathrm{tors}}$ denotes the $\Lambda$-torsion submodule of $X$, 
	and for every height one prime $\mathfrak{P}'$ of $\Lambda^{\mathrm{ur}}$ 
	\begin{equation}\label{eq:A4}
	\mathrm{ord}_{\mathfrak{P}'}(\mathscr{L}_{\mathfrak{p}}^{BDP})=\mathrm{length}_{\mathfrak{P}'}(\mathrm{coker}(\mathrm{loc}_{\mathfrak{p}})\Lambda^{\mathrm{ur}})+\mathrm{length}_{\mathfrak{P}'}\biggl(\dfrac{ \mathrm{Sel}_{\mathrm{Gr}}(K, \mathbf{T})\Lambda^{\mathrm{ur}} }{ \Lambda^{\mathrm{ur}} \kappa^\infty_1 }\biggr).
	\end{equation}
	Thus for any height one prime $\mathfrak{P}$ of $\Lambda$, letting $\mathfrak{P}'$ denote its extension to $\Lambda^{\mathrm{ur}}$, we see from $(\ref{eq:A3})$ and $(\ref{eq:A4})$ that
	\[
	\mathrm{length}_{\mathfrak{P}}(X_{\mathrm{tors}})\leqslant 2\;\mathrm{length}_{\mathfrak{P}}\biggl(\dfrac{ \mathrm{Sel}_{\mathrm{Gr}}(K, \mathbf{T}) }{ \Lambda \kappa^\infty_1 }\biggr)\quad\Longleftrightarrow\quad
	\mathrm{length}_{\mathfrak{P}}(X_{\emptyset,0})\leqslant 2\;\mathrm{ord}_{\mathfrak{P}'}(\mathscr{L}_{\mathfrak{p}}^{BDP}),
	\]
	and similarly for the opposite inequalities. The result follows. 
\end{proof}

\begin{rem}
Accounting for the difference between the unramified (as implicitly used here) and the strict local conditions in $\mathrm{H}^1(F_w,\mathbf{A})$ for $w\vert\ell\vert N^-$ in terms of $p$-parts of the corresponding
Tamagawa numbers (see e.g. \cite[\S{3}]{pw-mu}), it is possible to prove an analogue of Theorem~\ref{thm:equiv-imc} without the above ramification hypothesis on $E[p]$. Indeed, the difference will only affect the $\mu$-invariants of both sides due to \cite[Lem. 3.4]{pw-mu}.
\end{rem}

\section{Equivalent special value formulas}\label{sec:equiv-spval}

The goal of this section is to establish Corollary~\ref{cor:equiv-spval} below, which is a manifestation of the equivalence of Theorem~\ref{thm:equiv-imc} after specialization at the trivial character.

\begin{thm}\label{thm:equiv-spval}
	Assume that $\mathrm{rank}_{\mathbf{Z}}E(K)=1$, $\#\sha(E/K)<\infty$, and $E[p]$ is irreducible as $G_{\mathbf{Q}}$-module. Then $X^{N^-}_{\emptyset, 0}$ is $\Lambda$-torsion, and letting $f_{\emptyset,0}(T)\in\mathbf{Z}_p\llbracket T \rrbracket$ be a generator of its characteristic ideal, the following equivalence holds:
	\[
	f_{\emptyset,0}(0)\sim_p\biggl(\dfrac{1-a_p+p}{p}\biggr)^2\cdot\log_{\omega_E}(P)^2\quad\Longleftrightarrow\quad[E(K):\mathbf{Z}.P]^2\sim_p\#\sha(E/K)[p^\infty]\prod_{\ell\mid N^+}c_\ell^2,
	\]
	where $P\in E(K)$ is any generator of $E(K)\otimes_{\mathbf{Z}}\mathbf{Q}$, $c_\ell$ is the Tamagawa number of $E/\mathbf{Q}_\ell$,  and $\sim_p$ denotes equality up to a $p$-adic unit.
\end{thm}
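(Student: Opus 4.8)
The plan is to exploit the "anticyclotomic control theorem" of Jetchev--Skinner--Wan to pass from the characteristic ideal of the $\Lambda$-adic Selmer group $X^{N^-}_{\emptyset,0}$ to arithmetic invariants at the trivial character, and then to recognize the resulting quantity as (the $p$-part of) the BDP $p$-adic $L$-value, whose interpolation property at the trivial character is given by the Bertolini--Darmon--Prasanna formula. First I would invoke the hypothesis $\mathrm{rank}_{\mathbf{Z}}E(K)=1$ together with finiteness of $\sha(E/K)$ and irreducibility of $E[p]$ to deduce, via the control theorem, that $X^{N^-}_{\emptyset,0}$ is indeed $\Lambda$-torsion and that specialization at the augmentation ideal computes $f_{\emptyset,0}(0)$ up to a $p$-adic unit in terms of the order of the Bloch--Kato Selmer group with the $\emptyset$-condition at $\mathfrak{p}$ and the $0$-condition at $\overline{\mathfrak{p}}$, adjusted by the relevant local (Tamagawa-type) factors at the bad primes — crucially, with the primes $\ell\mid N^-$ contributing trivially because of the $N^-$-minimal/unramified local condition and the irreducibility hypothesis, so that only the $\ell\mid N^+$ primes survive with their $c_\ell$'s.

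Next I would compute this specialized Selmer group explicitly. Since the local condition at $\mathfrak{p}$ is the full $\mathrm{H}^1$ (no condition) and at $\overline{\mathfrak{p}}$ is the strict ($0$) condition, the relevant Selmer group sits in an exact sequence relating it to the classical $p^\infty$-Selmer group of $E/K$ and the local cohomology at $p$; using $\mathrm{rank}_{\mathbf{Z}}E(K)=1$ and finiteness of $\sha$, this identifies $\#(\text{specialized Selmer group})$, up to $p$-adic units, with $\#\sha(E/K)[p^\infty]\cdot[E(K):\mathbf{Z}.P]^{-2}\cdot(\text{explicit local term at }p)$, where the local term at $p$ produces precisely the factor $\bigl((1-a_p+p)/p\bigr)^2$ once one accounts for the ordinary filtration and the Euler factor in the BDP interpolation. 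Here one should be careful that $p>3$ is good ordinary, so $a_p$ is a $p$-adic unit and $1-a_p+p\not\equiv 0$; the squaring throughout reflects that BDP interpolates a square-root of an $L$-value, matching the "$M_\infty\oplus M_\infty$" shape in Conjecture~\ref{conj:HPMC}.

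Finally, I would combine the two computations: $f_{\emptyset,0}(0)$ equals, up to a $p$-adic unit, $\#\sha(E/K)[p^\infty]\cdot\prod_{\ell\mid N^+}c_\ell^2\cdot[E(K):\mathbf{Z}.P]^{-2}\cdot\bigl((1-a_p+p)/p\bigr)^2\cdot\log_{\omega_E}(P)^{-2}\cdot(\text{something})$ — and the point is that after the control theorem the "something" is a unit, so $f_{\emptyset,0}(0)\sim_p \bigl((1-a_p+p)/p\bigr)^2\log_{\omega_E}(P)^2$ if and only if $[E(K):\mathbf{Z}.P]^2\sim_p\#\sha(E/K)[p^\infty]\prod_{\ell\mid N^+}c_\ell^2$, which is the asserted equivalence. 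I expect the main obstacle to be the bookkeeping in the control theorem: pinning down exactly which local error terms appear at the primes dividing $Np$, verifying that the contributions at $\ell\mid N^-$ cancel (using the ramification/irreducibility hypotheses and the precise statements in \cite{jetchev-skinner-wan} and \cite{pw-mu}), and checking that the $\log_{\omega_E}(P)$ appearing via the BDP interpolation formula is matched correctly against the regulator term in the Birch--Swinnerton-Dyer-type quantity on the other side — in particular that no stray powers of $p$ are hidden in the comparison of the period $\Omega_p$ with $\omega_E$ and in the Manin-constant normalization (which is assumed to be $1$).
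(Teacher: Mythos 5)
Your proposal takes essentially the same route as the paper: invoke the Jetchev--Skinner--Wan anticyclotomic control theorem (their Theorem~3.3.1) to express $\#\mathbf{Z}_p/f_{\emptyset,0}(0)$ as $\#\mathrm{Sel}^{N^-}_{\emptyset,0}(K,E[p^\infty])$ times explicit local error factors, then use their evaluation (3.5.d) of that Selmer group to bring in $\#\sha(E/K)[p^\infty]$, the index $[E(K):\mathbf{Z}.P]$, the Tamagawa factors at $\ell\mid N^+$, and the $p$-local term $\bigl(\tfrac{1-a_p+p}{p}\bigr)\log_{\omega_E}(P)$, exactly as in the paper. Two minor remarks: in your displayed combination the exponent of $\log_{\omega_E}(P)$ should be $+2$ rather than $-2$ for your own final equivalence to fall out, and the Bertolini--Darmon--Prasanna interpolation formula is actually not used in the proof of this theorem (the $\log$ and Euler-type factors arise purely from the algebraic control-theorem computation); BDP only enters in the subsequent corollary where $f_{\emptyset,0}(0)$ is compared with $\mathscr{L}_{\mathfrak{p}}^{BDP}(0)^2$.
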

\begin{rem}
Note that no Tamagawa defect at the primes dividing $N^-$ is assigned in the RHS due to the $N^-$-minimal local condition of the Selmer group. Indeed, $c_\ell$ for $\ell$ dividing $N^-$ becomes trivial in our setting due to Condition $\heartsuit$ (Condition \ref{assu:heart}.(2)). See \cite[Prop. 3.7]{pw-mu} for the definite case.
\end{rem}
\begin{proof}
	As shown in \cite[p.~395-6]{jetchev-skinner-wan}, our assumptions imply hypotheses (corank~1), (sur), and (irred$_{\mathcal{K}}$) of \cite[\S{3.1}]{jetchev-skinner-wan}, and so by [\emph{loc.~cit.}, Thm.~3.3.1] (with $S=S_p$ the set of primes dividing $N$ and $\Sigma=\emptyset$) the module $X^{N^-}_{\emptyset, 0}$ is $\Lambda$-torsion, and  
	\begin{equation}\label{eq:control}
	\# \mathbf{Z}_p / f_{\emptyset, 0}(0) = \# \mathrm{Sel}^{N^-}_{\emptyset, 0}(K, E[p^\infty]) \cdot C^{}(E[p^\infty]),
	\end{equation}
	where
	\[
	C^{}(E[p^\infty]) := \# \mathrm{H}^0(K_{\mathfrak{p}}, E[p^\infty]) \cdot \# \mathrm{H}^0(K_{\overline{\mathfrak{p}}}, E[p^\infty]) \cdot \prod_{w\mid N^+} \# \mathrm{H}^1_{\mathrm{ur}}(K_w, E[p^\infty]).
	\]
	On the other hand, from \cite[(3.5.d)]{jetchev-skinner-wan} we have
	\begin{equation}\label{eq:ev-index}
	\# \mathrm{Sel}^{N^-}_{\emptyset, 0} (K, E[p^\infty]) 
	=
	\# \sha(E/K)[p^\infty] \cdot \biggl(  \dfrac{ \# (\mathbf{Z}_p / ( \frac{1-a_p+p}{p}) \cdot \mathrm{log}_{\omega_E} P) }{ [E(K): \mathbf{Z}.P]_p \cdot \# \mathrm{H}^0(K_{\mathfrak{p}}, E[p^\infty]) } \biggr)^2,
	\end{equation}
	where $P\in E(K)$ is any generator of $E(K)\otimes_{\mathbf{Z}}\mathbf{Q}$, and $[E(K): \mathbf{Z}.P]_p$ 
	denotes the $p$-part of the index $[E(K): \mathbf{Z}.P]$. Combining $(\ref{eq:control})$ and $(\ref{eq:ev-index})$ we thus arrive at
	\[
	\# \mathbf{Z}_p / f_{\emptyset, 0}(0) = 
	\# \sha(E/K)[p^\infty]\cdot\biggl(  \dfrac{ \# (\mathbf{Z}_p / ( \frac{1-a_p+p}{p}) \cdot \mathrm{log}_{\omega_E} P) }{ [E(K): \mathbf{Z}.P]_p }\biggr)^2\cdot\prod_{w\mid N^+} \# \mathrm{H}^1_{\mathrm{ur}}(K_w, E[p^\infty]).
	\]
	Since the order of $\mathrm{H}^1_{\mathrm{ur}}(K_w,E[p^\infty])$ is the $p$-part of the Tamagawa number of $E/K_w$, the result follows.
\end{proof}

The fundamental $p$-adic Waldspurger formula due to Bertolini--Darmon--Prasanna \cite{bertolini-darmon-prasanna-duke} will allow us to relate the left-hand side of Theorem~\ref{thm:equiv-spval} to the anticyclotomic main conjecture.

\begin{thm}[Bertolini--Darmon--Prasanna]\label{thm:bdp}
The following special value formula holds:
\[
\mathscr{L}^{BDP}_{\mathfrak{p}}(0) = \biggl( \frac{1-a_p+p}{p} \biggr) \cdot \bigl( \mathrm{log}_{\omega_E} y_K \bigr),
\]
where $y_K\in E(K)$ is a Heegner point.
\end{thm}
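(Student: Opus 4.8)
Here is a plan of proof for Theorem~\ref{thm:bdp}.

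The plan is to read this identity off from the ``$p$-adic Gross--Zagier'' / Waldspurger formula of Bertolini--Darmon--Prasanna \cite{bertolini-darmon-prasanna-duke}, in the form extended by Brooks \cite{brooks} to the Shimura curve case $N^-\neq 1$ and normalized $\Lambda$-adically as in Castella--Hsieh \cite{cas-hsieh1}. The conceptual point is that the trivial character of $\mathrm{Gal}(K_\infty/K)$ lies \emph{outside} the range of $p$-adic interpolation defining $\mathscr{L}_{\mathfrak{p}}^{BDP}$. Recall that $\mathscr{L}_{\mathfrak{p}}^{BDP}$ is the measure on $\mathrm{Gal}(K_\infty/K)$ built by averaging, over the Galois orbit of a fixed CM point $\mathscr{A}$ on the (modular or Shimura) curve $X$ with good ordinary reduction at $p$, the values at $\mathscr{A}$ of the image under $d^{\,j}$ of the $p$-depletion $f_E^{(p)}$ of the newform $f_E$ attached to $E$, where $d=\theta$ is the Atkin--Serre operator; its moment at an anticyclotomic character of infinity type $(j,-j)$ with $j\geqslant 1$ recovers, up to CM periods and explicit archimedean factors, the central Rankin--Selberg value $L(E/K,\chi,1)$ via Waldspurger's formula, which is the interpolation property. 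At the trivial character the operator to apply is $d^{-1}$ --- which makes sense since the $p$-depletion kills the constant term and the Fourier coefficients divisible by $p$ --- so $\mathscr{L}_{\mathfrak{p}}^{BDP}(0)$ equals, up to the period normalization fixed in the construction, the single value $(d^{-1}f_E^{(p)})(\mathscr{A})$.

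It then remains to identify $(d^{-1}f_E^{(p)})(\mathscr{A})$ with $\tfrac{1-a_p+p}{p}\cdot\log_{\omega_E}y_K$, which I would do in two steps. \emph{(a)} With $\alpha,\beta$ the Frobenius eigenvalues at $p$ ($\alpha\beta=p$, $\alpha+\beta=a_p$, trivial nebentypus), one has $f_E^{(p)}=(1-\alpha V_p)(1-\beta V_p)\,f_E$ with $V_p$ the Frobenius operator on $q$-expansions; since $\mathscr{A}$ is ordinary with canonical unit-root subgroup, a standard computation with the unit-root Frobenius at $\mathscr{A}$ shows that passing from $f_E$ to its $p$-depletion introduces exactly the factor $(1-\alpha^{-1})(1-\beta^{-1})=\tfrac{1-a_p+p}{p}$, i.e. $(d^{-1}f_E^{(p)})(\mathscr{A})=\tfrac{1-a_p+p}{p}\cdot(d^{-1}f_E)(\mathscr{A})$ in the appropriate sense. \emph{(b)} Under the modular parametrization $\phi\colon X\to E$ one has $\phi^{*}\omega_E=\omega_{f_E}=f_E\,\tfrac{dq}{q}$ (the Manin constant being $1$), and a Coleman primitive of $\omega_{f_E}$ is precisely $d^{-1}f_E$; by functoriality of Coleman integration under $\phi$ and the $p$-adic de Rham comparison, $(d^{-1}f_E)(\mathscr{A})$ computes the formal group logarithm $\log_{\omega_E}$ of the point $\phi(\mathscr{A})$ --- that is, the $p$-adic Abel--Jacobi image of the Heegner point --- and by the theory of complex multiplication $\phi(\mathscr{A})$, after the appropriate trace from the Hilbert class field, is the Heegner point $y_K\in E(K)$. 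Assembling \emph{(a)} and \emph{(b)} yields the formula; since we are in weight $2$, the generalized Heegner cycles of \cite{bertolini-darmon-prasanna-duke} degenerate to Heegner points and no higher-cycle machinery is needed.

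The automorphic inputs --- Waldspurger's formula, the $p$-adic Eichler--Shimura comparison, and the Coleman-integration computation of the Abel--Jacobi map --- are supplied by \cite{bertolini-darmon-prasanna-duke} (and by \cite{brooks} for $N^-\neq 1$); modulo these, the work left is bookkeeping of normalizations, and this is where the real care is needed. One must match the CM-period normalization implicit in the definition of $\mathscr{L}_{\mathfrak{p}}^{BDP}$ against the choices of N\'eron differential $\omega_E$ and of Heegner point $y_K$, and track the factors at $p$ (including the precise power of $d$) so that the correction comes out as \emph{exactly} $\tfrac{1-a_p+p}{p}$ and not merely a $p$-adic unit multiple of it. When $N^-\neq 1$ there is the additional point that $X$ is a Shimura curve without cusps, so $q$-expansions must be replaced by Serre--Tate coordinates around the ordinary CM point and by Brooks' theory of $p$-adic modular forms \cite{brooks}, but steps \emph{(a)}--\emph{(b)} are formally unchanged there. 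In the end the statement is \cite[Thm.~5.13]{bertolini-darmon-prasanna-duke} and its Shimura-curve analogue, to which one appeals.
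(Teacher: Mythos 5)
Your proposal is correct and ultimately takes the same route as the paper: the paper's proof is a one-line citation of \cite[Thm.~5.13]{bertolini-darmon-prasanna-duke} (together with \cite[Thm.~3.12]{bertolini-darmon-prasanna-pacific} and \cite[Thm.~1.1]{brooks} for $N^-\neq1$), and you likewise land on exactly this appeal after unpacking the mechanism --- trivial character lying outside the interpolation range, $d^{-1}$ applied to the $p$-depleted form producing the Euler factor $(1-\alpha^{-1})(1-\beta^{-1})=\tfrac{1-a_p+p}{p}$, and Coleman integration identifying the result with $\log_{\omega_E}y_K$.
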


\begin{proof}
This is a special case of \cite[Theorem 5.13]{bertolini-darmon-prasanna-duke} (\emph{cf.} \cite[Theorem 3.12]{bertolini-darmon-prasanna-pacific}) and \cite[Theorem 1.1]{brooks}.
\end{proof}

\begin{cor}\label{cor:equiv-spval}
With notations and hypotheses as in Theorem~\ref{thm:equiv-spval}, assume in addition that $(E,p,K)$ satisfies Condition~$\heartsuit$. Then the following equivalence holds:
\[
f_{\emptyset,0}(0)\sim_p \mathscr{L}_{\mathfrak{p}}^{BDP}(0)^2\quad\Longleftrightarrow\quad[E(K):\mathbf{Z}.P]^2\sim_p\#\sha(E/K)[p^\infty]
\]
for $P$ a $p$-unit multiple of the Heegner point $y_{K} \in E(K)$.
\end{cor}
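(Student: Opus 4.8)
The plan is to deduce Corollary~\ref{cor:equiv-spval} directly from Theorem~\ref{thm:equiv-spval}, by rewriting each side of the displayed equivalence there using the $p$-adic Waldspurger formula of Theorem~\ref{thm:bdp} on the analytic side and Condition~$\heartsuit$ on the arithmetic side. Throughout, $P$ denotes the prescribed $p$-unit multiple of the Heegner point $y_K$; in the situation of interest $y_K$ is non-torsion, so $P$ generates $E(K)\otimes_{\mathbf{Z}}\mathbf{Q}$ and Theorem~\ref{thm:equiv-spval} does apply to this $P$.

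On the analytic side, I would first note that $\log_{\omega_E}(P)=m\cdot\log_{\omega_E}(y_K)$ for some integer $m$ prime to $p$, by linearity of the formal group logarithm (torsion maps to $0$). Squaring Theorem~\ref{thm:bdp} then gives
\[
\mathscr{L}_{\mathfrak{p}}^{BDP}(0)^2=\biggl(\frac{1-a_p+p}{p}\biggr)^2\cdot\log_{\omega_E}(y_K)^2\sim_p\biggl(\frac{1-a_p+p}{p}\biggr)^2\cdot\log_{\omega_E}(P)^2,
\]
so the left-hand condition in Theorem~\ref{thm:equiv-spval} is equivalent to $f_{\emptyset,0}(0)\sim_p\mathscr{L}_{\mathfrak{p}}^{BDP}(0)^2$.

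On the arithmetic side, the task is to show $\prod_{\ell\mid N^+}c_\ell^2\sim_p 1$, i.e. $p\nmid c_\ell$ for every prime $\ell\mid N^+$, using Condition~$\heartsuit$. If $\ell^2\mid N^+$, then $E/\mathbf{Q}_\ell$ has additive reduction and the Kodaira--N\'eron classification gives $c_\ell\leqslant 4<p$ since $p>3$. If $\ell\Vert N^+$, then $E/\mathbf{Q}_\ell$ has multiplicative reduction: in the non-split case $c_\ell\leqslant 2<p$ automatically, while in the split case $E/\mathbf{Q}_\ell$ is a Tate curve $E_q$ with $c_\ell=v_\ell(q)$, and Condition~$\heartsuit$(1) forces $E[p]$ to be ramified at $\ell$, which by the Tate parametrization is exactly the assertion that $p\nmid v_\ell(q)=c_\ell$. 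Hence $p\nmid c_\ell$ in every case, so the right-hand condition in Theorem~\ref{thm:equiv-spval} is equivalent to $[E(K):\mathbf{Z}.P]^2\sim_p\#\sha(E/K)[p^\infty]$. Combining these two equivalences with Theorem~\ref{thm:equiv-spval} yields the corollary.

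The argument is in essence bookkeeping with $p$-adic units, and I do not anticipate a serious obstacle. The one step demanding genuine (though standard) input is the split multiplicative-reduction case of the claim $p\nmid c_\ell$, where one must use the dictionary furnished by the Tate parametrization relating the ramification of $E[p]$ at $\ell$, the $\ell$-adic valuation of the Tate period $q$, and the Tamagawa number $c_\ell$. Everything else --- the non-split and additive cases, and the analytic side --- is immediate from Theorems~\ref{thm:equiv-spval} and \ref{thm:bdp}.
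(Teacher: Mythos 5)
Your proof is correct and takes the same route as the paper: apply Theorem~\ref{thm:equiv-spval} and Theorem~\ref{thm:bdp}, and observe that Condition~$\heartsuit$ makes the Tamagawa numbers $c_\ell$ for $\ell\mid N^+$ into $p$-adic units. The paper states the Tamagawa observation without proof; your case analysis (additive: $c_\ell\leqslant 4<p$; non-split multiplicative: $c_\ell\leqslant 2<p$; split multiplicative: Condition~$\heartsuit$(1) forces $E[p]$ ramified at $\ell$, hence $p\nmid v_\ell(q)=c_\ell$ by the Tate parametrization) is exactly the standard justification.
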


\begin{proof}
Since Condition~$\heartsuit$ forces all the Tamagawa numbers $c_\ell$ for the primes $\ell\vert N^+$ to be $p$-adic units, the result follows	from Theorem~\ref{thm:equiv-spval} and Theorem~\ref{thm:bdp}.
\end{proof}
\begin{rem} 
Here we require $P$ to be a $p$-unit multiple of the Heegner point $y_{K}$, as otherwise the logarithm $\log_{\omega_{E}}P$ and the index $[E(K):\mathbb{Z}.P]$ can be divisible by an extra power of $p$. 
\end{rem}

\section{Skinner--Urban lifting lemma}

We recall the following ``easy lemma" in \cite{skinner-urban}.

\begin{lem}
	\label{lem:3.2}
	Let $A$ be a ring and $\mathfrak{a}$ be a proper ideal contained in the Jacobson radical of $A$.
	Assume that $A/\mathfrak{a}$ is a domain. Let $L \in A$ be such that its reduction modulo $\mathfrak{a}$ is non-zero.
	Let $I \subseteq (L)$ be an ideal of $A$ and $\overline{I}$ be the image of $I$ in $A/\mathfrak{a}$.
	If $L \pmod{\mathfrak{a}} \in \overline{I}$, then $I = (L)$.
\end{lem}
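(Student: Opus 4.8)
\textbf{Proof plan for Lemma~\ref{lem:3.2}.}

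The plan is to argue directly using the hypotheses that $A/\mathfrak{a}$ is a domain, that $\mathfrak{a}$ lies in the Jacobson radical, and that $L$ is a nonzerodivisor-witness modulo $\mathfrak{a}$ in a suitable sense. First I would observe that, since $I\subseteq(L)$, every element of $I$ is of the form $aL$ for some $a\in A$; in particular it suffices to produce a single element $L'\in I$ with $L'=uL$ for a unit $u\in A^\times$, for then $(L)=(L')\subseteq I\subseteq(L)$ and we are done. So the task reduces to finding such an $L'$.

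Next I would use the hypothesis $L\bmod\mathfrak{a}\in\overline{I}$. By definition of $\overline{I}$ as the image of $I$, this means there exists $L'\in I$ whose reduction modulo $\mathfrak{a}$ equals the reduction of $L$ modulo $\mathfrak{a}$; equivalently, $L'\equiv L\pmod{\mathfrak{a}}$. Write $L'=aL$ with $a\in A$ (possible since $I\subseteq(L)$). Then $aL\equiv L\pmod{\mathfrak{a}}$, i.e. $(a-1)L\in\mathfrak{a}$. Reducing modulo $\mathfrak{a}$ in the domain $A/\mathfrak{a}$, we get $(\bar a-\bar 1)\bar L=0$; since $\bar L\neq 0$ by hypothesis and $A/\mathfrak{a}$ is a domain, this forces $\bar a=\bar 1$, i.e. $a\equiv 1\pmod{\mathfrak{a}}$.

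Finally I would invoke the Jacobson radical hypothesis: since $a-1\in\mathfrak{a}$ and $\mathfrak{a}$ is contained in the Jacobson radical of $A$, the element $a=1+(a-1)$ is a unit in $A$ (this is the standard characterization of the Jacobson radical: $x$ lies in it iff $1+xy$ is a unit for all $y$, so in particular $1+(a-1)$ is a unit). Therefore $L'=aL$ with $a\in A^\times$, giving $(L')=(L)$ and hence $I=(L)$ as explained in the first step.

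The argument is entirely elementary, so there is no real obstacle; the only point requiring any care is making sure the reduction $L'$ of an element of $I$ with $\bar{L'}=\bar L$ can be written as $aL$ — which is immediate from $I\subseteq(L)$ — and then correctly applying the domain property to cancel $\bar L$. One should also note that properness of $\mathfrak{a}$ is used only implicitly (it guarantees $A/\mathfrak{a}\neq 0$ so that ``domain'' is meaningful and $\bar 1\neq\bar 0$), and $L\bmod\mathfrak{a}\neq 0$ is exactly what makes the cancellation legitimate.
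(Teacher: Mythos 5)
Your proof is correct and is the standard argument for this lemma; the paper itself gives no proof but simply cites \cite[Lem.~3.2]{skinner-urban}, and your argument reproduces the elementary reasoning there: write $L'=aL$ with $L'\in I$ lifting $\bar L$, use the domain property to conclude $a\equiv 1\pmod{\mathfrak{a}}$, and then the Jacobson radical hypothesis to conclude $a$ is a unit. Nothing is missing.
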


\begin{proof}
This is \cite[Lem.~3.2]{skinner-urban}.
\end{proof}

For our application, we shall set $A := \Lambda$, $\mathfrak{a} := (\gamma - 1)$ the augmentation ideal of $\Lambda$, $L:= f_{\emptyset, 0}(T)$ a generator of the characteristic ideal of $X_{\emptyset,0}$, and $I$ the ideal generated by $\mathscr{L}_{\mathfrak{p}}^{BDP}$. The divisibility $I\subseteq (L)$ will be a consequence of  Theorem~\ref{thm:howard} and Theorem~\ref{thm:equiv-imc}, and (assuming analytic rank $1$) the relations $0\neq f_{\emptyset,0}(0)\sim_p\mathscr{L}_{\mathfrak{p}}^{BDP}(0)$ will be deduced from Corollary~\ref{cor:structure_of_sha} and Corollary~\ref{cor:equiv-spval}; the equality $I=(L)$ 
will then follow.  

\begin{rem}
Note that the roles of algebraic and analytic $p$-adic $L$-functions are switched in our setting comparing with those of \cite{skinner-urban}. This is possible since $\Lambda$ is a UFD, and so the characteristic ideal of a finitely generated $\Lambda$-module is principal.
\end{rem}

\section{Proof of the main results}

We are now ready to prove our  main results.

\begin{thm}\label{thm:mainresult}
Let $E/\mathbf{Q}$ be an elliptic curve of conductor $N$, let  $p>3$ be a good ordinary prime for $E$, and let $K$ be an imaginary quadratic field  of discriminant $D_K$ with $(D_K, Np) = 1$. Assume that: 
\begin{itemize}
	\item $N^-$ is the square-free product of an even number of primes.
	\item $(E,p,K)$ satisfies Condition~$\heartsuit$.
	\item $G_K\rightarrow\mathrm{Aut}_{\mathbf{F}_p}(E[p])$ is surjective.
	\item $p=\mathfrak{p}\overline{\mathfrak{p}}$ splits in $K$.
\end{itemize}
In addition, assume that $\mathrm{ord}_{s=1}L(E/K,s)=1$. Then Conjecture~\ref{conj:HPMC} holds.
\end{thm}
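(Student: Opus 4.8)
The plan is to assemble Theorem~\ref{thm:mainresult} from the ingredients established in the preceding sections, following the outline sketched in the Introduction. First I would record that the hypotheses of Theorem~\ref{thm:mainresult} imply those of all the cited results: in particular Condition~$\heartsuit$ (parts (1)--(2)) forces $E[p]$ to be ramified at every prime $\ell\mid N^-$ and at every $\ell\Vert N^+$, so Theorem~\ref{thm:equiv-imc} applies; surjectivity of $\overline{\rho}\colon G_K\to\mathrm{Aut}_{\mathbf{F}_p}(E[p])$ gives the running hypotheses of Theorems~\ref{thm:howard} and \ref{thm:indivisibility}; and $(D_K,Np)=1$ together with $p>3$ good ordinary and $p=\mathfrak{p}\overline{\mathfrak{p}}$ split supplies the rest. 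I would also invoke the Gross--Zagier formula \cite{gross-zagier-original, yuan-zhang-zhang}: since $\mathrm{ord}_{s=1}L(E/K,s)=1$, the Heegner point $y_K\in E(K)$ has infinite order, whence $\mathrm{rank}_{\mathbf{Z}}E(K)=1$ by Kolyvagin \cite{kolyvagin-selmer} (using Theorem~\ref{thm:indivisibility}), and $\sha(E/K)$ is finite; so equation~(\ref{eq:alg-rank1}) holds and we may take $P\in E(K)$ to be a $p$-unit multiple of $y_K$ generating $E(K)\otimes\mathbf{Q}$. These inputs make Theorem~\ref{thm:equiv-spval} and Corollary~\ref{cor:equiv-spval} available.

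Next I would carry out the Iwasawa-theoretic core. By Theorem~\ref{thm:howard}, $\mathrm{Sel}_{\mathrm{Gr}}(K,\mathbf{T})$ is a torsion-free rank-one $\Lambda$-module, $X\sim\Lambda\oplus M_\infty\oplus M_\infty$ with $\mathrm{char}(M_\infty)=\mathrm{char}(M_\infty)^\iota$, and $\mathrm{char}(M_\infty)$ divides $\mathrm{char}(\mathrm{Sel}_{\mathrm{Gr}}(K,\mathbf{T})/\Lambda\kappa_1^\infty)$; thus statements (1) and (2) of Conjecture~\ref{conj:HPMC} already hold, and only the reverse divisibility in (3) remains. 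By the equivalence of Theorem~\ref{thm:equiv-imc}, it suffices to prove the divisibility $\mathrm{char}(X^{N^-}_{\emptyset,0})\Lambda^{\mathrm{ur}}\supseteq(\mathscr{L}_{\mathfrak{p}}^{BDP})^2$ in Conjecture~\ref{conj:BDP}; equivalently (again via Theorem~\ref{thm:equiv-imc}, which also promotes Howard's divisibility to ``$\supseteq$'' in Conjecture~\ref{conj:BDP}) we have the one divisibility $(\mathscr{L}_{\mathfrak{p}}^{BDP})^2\supseteq\mathrm{char}(X^{N^-}_{\emptyset,0})\Lambda^{\mathrm{ur}}$ for free, i.e. $\mathscr{L}_{\mathfrak{p}}^{BDP}$ divides $f_{\emptyset,0}(T)$ up to units (after possibly an $\mathrm{ur}$-base change, noting $\Lambda$ is a UFD so characteristic ideals are principal), and we must establish the opposite one. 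Here is where Lemma~\ref{lem:3.2} enters: set $A=\Lambda$, $\mathfrak{a}=(\gamma-1)$, $L=f_{\emptyset,0}(T)$ a generator of $\mathrm{char}(X^{N^-}_{\emptyset,0})$, and $I=(\mathscr{L}_{\mathfrak{p}}^{BDP})$, so $I\subseteq(L)$ by what we just said and $A/\mathfrak{a}=\mathbf{Z}_p$ is a domain. To apply the lemma we need $L\bmod\mathfrak{a}=f_{\emptyset,0}(0)$ nonzero and lying in $\overline{I}=(\mathscr{L}_{\mathfrak{p}}^{BDP}(0))$, i.e. $f_{\emptyset,0}(0)\sim_p\mathscr{L}_{\mathfrak{p}}^{BDP}(0)$ with both nonzero.

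The nonvanishing and the comparison $f_{\emptyset,0}(0)\sim_p\mathscr{L}_{\mathfrak{p}}^{BDP}(0)$ are exactly what Corollary~\ref{cor:equiv-spval} reduces to the index formula $[E(K):\mathbf{Z}.P]^2\sim_p\#\sha(E/K)[p^\infty]$ (for $P$ a $p$-unit multiple of $y_K$); and this last identity is precisely Corollary~\ref{cor:structure_of_sha}, which gives $\mathrm{ord}_p(\#\sha(E/K)[p^\infty])=2\cdot\mathrm{ord}_p[E(K):\mathbf{Z}.y_K]$ under our hypotheses (analytic rank $1$, Condition~$\heartsuit$, surjectivity). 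Note $\mathscr{L}_{\mathfrak{p}}^{BDP}(0)\neq 0$ since by Theorem~\ref{thm:bdp} it equals $\bigl(\tfrac{1-a_p+p}{p}\bigr)\log_{\omega_E}y_K$, which is nonzero because $y_K$ has infinite order and $1-a_p+p\not\equiv 0$ modulo $\mathfrak{p}$ (as $p$ is ordinary and split so $1-a_p+p$ is a $p$-adic unit, $a_p$ being a $p$-adic unit and $p>3$), and correspondingly $f_{\emptyset,0}(0)\neq 0$ by the control formula~(\ref{eq:control}). Lemma~\ref{lem:3.2} then yields $I=(L)$, i.e. $\mathrm{char}(X^{N^-}_{\emptyset,0})\Lambda^{\mathrm{ur}}=(\mathscr{L}_{\mathfrak{p}}^{BDP})^2$, which is Conjecture~\ref{conj:BDP}; transporting back through Theorem~\ref{thm:equiv-imc} gives the missing divisibility ``$\subseteq$'' in Conjecture~\ref{conj:HPMC}(3), and combined with Theorem~\ref{thm:howard} this proves Conjecture~\ref{conj:HPMC}, completing the proof. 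The main obstacle is not any single hard step here—each is cited—but rather the bookkeeping of matching hypotheses across \cite{howard-kolyvagin, howard-gl2-type}, \cite{wei-zhang-mazur-tate}, \cite{jetchev-skinner-wan}, \cite{castella-beilinson-flach}, and \cite{bertolini-darmon-prasanna-duke, brooks}, and in particular checking that Condition~$\heartsuit$ as strengthened in Assumption~\ref{assu:heart} (versus the weaker form in \cite{wei-zhang-mazur-tate}, see Remark~\ref{rem:conditionCR}) genuinely suffices for all invocations while simultaneously forcing the Tamagawa numbers $c_\ell$ at $\ell\mid N$ into $p$-adic units, so that the clean index identity of Corollary~\ref{cor:equiv-spval} holds without correction terms.
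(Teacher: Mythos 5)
Your proposal correctly identifies all the ingredients the paper uses and assembles them in the same order---Theorem~\ref{thm:howard}, Theorem~\ref{thm:equiv-imc}, Corollaries~\ref{cor:structure_of_sha} and \ref{cor:equiv-spval}, and Lemma~\ref{lem:3.2}---so the global strategy is exactly the paper's. However, the passage where you set up Lemma~\ref{lem:3.2} has the directions of the key divisibilities reversed and drops the square on $\mathscr{L}_{\mathfrak{p}}^{BDP}$, and as written the application of the lemma would not yield Conjecture~\ref{conj:BDP}.

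Concretely: Howard's divisibility $\mathrm{char}(M_\infty)\mid\mathrm{char}\bigl(\mathrm{Sel}_{\mathrm{Gr}}(K,\mathbf{T})/\Lambda\kappa_1^\infty\bigr)$ translates under Theorem~\ref{thm:equiv-imc} into the ideal containment $(f_{\emptyset,0})\supseteq(\mathscr{L}_{\mathfrak{p}}^{BDP})^2$, i.e.\ $f_{\emptyset,0}$ divides $(\mathscr{L}_{\mathfrak{p}}^{BDP})^2$; the remaining direction to prove is $(f_{\emptyset,0})\subseteq(\mathscr{L}_{\mathfrak{p}}^{BDP})^2$. You assert instead that Howard yields $(\mathscr{L}_{\mathfrak{p}}^{BDP})^2\supseteq\mathrm{char}(X^{N^-}_{\emptyset,0})\Lambda^{\mathrm{ur}}$ ``for free'' and that the divisibility still to be proved is $\mathrm{char}(X^{N^-}_{\emptyset,0})\Lambda^{\mathrm{ur}}\supseteq(\mathscr{L}_{\mathfrak{p}}^{BDP})^2$---both reversed. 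More seriously, in the application of Lemma~\ref{lem:3.2} you take $I=(\mathscr{L}_{\mathfrak{p}}^{BDP})$. The correct choice is $I=\bigl((\mathscr{L}_{\mathfrak{p}}^{BDP})^2\bigr)$: only then does the hypothesis $I\subseteq(L)$ with $L=f_{\emptyset,0}$ follow from Howard's $(f_{\emptyset,0})\supseteq(\mathscr{L}_{\mathfrak{p}}^{BDP})^2$ (note $(f_{\emptyset,0})\supseteq(\mathscr{L}_{\mathfrak{p}}^{BDP})^2$ does not give $(f_{\emptyset,0})\supseteq(\mathscr{L}_{\mathfrak{p}}^{BDP})$), and only then does the conclusion $I=(L)$ read $(\mathscr{L}_{\mathfrak{p}}^{BDP})^2=(f_{\emptyset,0})$ as required; with your $I$ the conclusion would be $(\mathscr{L}_{\mathfrak{p}}^{BDP})=(f_{\emptyset,0})$, which is not Conjecture~\ref{conj:BDP}. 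Correspondingly, the remaining hypothesis of the lemma is $f_{\emptyset,0}(0)\in\bigl(\mathscr{L}_{\mathfrak{p}}^{BDP}(0)^2\bigr)$, and Corollary~\ref{cor:equiv-spval} indeed gives $f_{\emptyset,0}(0)\sim_p\mathscr{L}_{\mathfrak{p}}^{BDP}(0)^2$, not $f_{\emptyset,0}(0)\sim_p\mathscr{L}_{\mathfrak{p}}^{BDP}(0)$ as you state. Once you insert the square on $\mathscr{L}_{\mathfrak{p}}^{BDP}$ throughout and fix the direction of Howard's divisibility, the argument closes and matches the paper's proof. A small additional point: you claim $1-a_p+p$ is a $p$-adic unit; this fails when $a_p\equiv 1\pmod p$ (the anomalous case), but the argument only needs $1-a_p+p\neq 0$, which is immediate from the Hasse bound, together with $\log_{\omega_E}y_K\neq 0$.
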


\begin{proof}
By Theorem~\ref{thm:howard}, the Pontryagin dual $X$ of $\mathrm{Sel}_{\mathrm{Gr}}(K,\mathbf{A})$ has $\Lambda$-rank $1$, and its $\Lambda$-torsion submodule $X_{\mathrm{tors}}$ is such that
\[
\mathrm{char}(X_{\mathrm{tors}})\supseteq\mathrm{char}\biggl( \dfrac{ \mathrm{Sel}_{\mathrm{Gr}}(K,\mathbf{T}) }{ \Lambda \kappa^\infty_1 }\biggr)^2.
\]
By Theorem~\ref{thm:equiv-imc}, it follows that the Pontrjagin dual $X^{N^-}_{\emptyset,0}$ of $\mathrm{Sel}^{N^-}_{\emptyset,0}(K,\mathbf{A})$ is $\Lambda$-torsion, and we have
\begin{equation}\label{eq:howard-div}
(f_{\emptyset,0})\supseteq(\mathscr{L}_{\mathfrak{p}}^{BDP})^2,
\end{equation}
where $f_{\emptyset,0}\in\Lambda$ is a generator of the characteristic ideal $\mathrm{char}(X^{N^-}_{\emptyset,0})$.
On the other hand, by the work of Gross--Zagier and Kolyvagin, the assumption that  $\mathrm{ord}_{s=1}L(E/K,s)=1$ implies the Heegner point $y_K\in E(K)$ is non-torsion, and we have  $\mathrm{rank}_{\mathbf{Z}}E(K)=1$ and  $\#\sha(E/K)<\infty$; 
while by Corollary~\ref{cor:structure_of_sha} we have
\[
[E(K):\mathbf{Z}.y_K]^2\sim_p\#\sha(E/K)[p^\infty].
\] 
In light of Corollary~\ref{cor:equiv-spval}, the last equality (up to a $p$-adic units) amounts to the equality
\begin{equation}\label{eq:equiv-spval}
f_{\emptyset,0}(0)\sim_p\mathscr{L}_{\mathfrak{p}}^{BDP}(0)^2,
\end{equation}   
and given $(\ref{eq:howard-div})$ and $(\ref{eq:equiv-spval})$, the result follows from Lemma~\ref{lem:3.2}.
\end{proof}

\begin{cor}
Let the hypotheses be as in Theorem~\ref{thm:mainresult}. Then Conjecture~\ref{conj:BDP} holds. 
\end{cor}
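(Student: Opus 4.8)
The plan is to deduce Conjecture~\ref{conj:BDP} from Theorem~\ref{thm:mainresult} by running the equivalences of Sections~\ref{sec:equiv-IMC}--\ref{sec:equiv-spval} in the reverse direction. First I would observe that under the hypotheses of Theorem~\ref{thm:mainresult} we have just proved Conjecture~\ref{conj:HPMC}; in particular, $X$ has $\Lambda$-rank $1$ and the equality in Conjecture~\ref{conj:HPMC}(3) holds, i.e.
\[
\mathrm{char}\biggl(\dfrac{\mathrm{Sel}_{\mathrm{Gr}}(K,\mathbf{T})}{\Lambda\kappa^\infty_1}\biggr)=\mathrm{char}(M_\infty),\qquad \mathrm{char}(X_{\mathrm{tors}})=\mathrm{char}(M_\infty)^2.
\]
Next I would invoke Theorem~\ref{thm:equiv-imc}: Condition~$\heartsuit$ part~(2) guarantees that $E[p]$ is ramified at every prime $\ell\mid N^-$, so the theorem applies, and the equality of characteristic ideals in Conjecture~\ref{conj:HPMC}(3) is equivalent to the corresponding equality $(f_{\emptyset,0})=(\mathscr{L}_{\mathfrak{p}}^{BDP})^2$ in $\Lambda^{\mathrm{ur}}$, where $f_{\emptyset,0}$ generates $\mathrm{char}(X^{N^-}_{\emptyset,0})$ (note $X_{\emptyset,0}=X^{N^-}_{\emptyset,0}$ under the ramification hypothesis). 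That $X^{N^-}_{\emptyset,0}$ is $\Lambda$-torsion is already part of the statement of Theorem~\ref{thm:equiv-imc} (and was recorded in the proof of Theorem~\ref{thm:mainresult}), so the torsion assertion of Conjecture~\ref{conj:BDP} is immediate.

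The only subtlety is that Theorem~\ref{thm:equiv-imc} transports the divisibility in Conjecture~\ref{conj:HPMC}(3) to the divisibility in Conjecture~\ref{conj:BDP} in $\Lambda^{\mathrm{ur}}$, so I must check both one-sided divisibilities. The divisibility $(f_{\emptyset,0})\supseteq(\mathscr{L}_{\mathfrak{p}}^{BDP})^2$ is exactly $(\ref{eq:howard-div})$, established in the course of proving Theorem~\ref{thm:mainresult} from Howard's divisibility. For the reverse, Theorem~\ref{thm:mainresult} gives the equality in Conjecture~\ref{conj:HPMC}(3), and hence the other one-sided divisibility there; feeding this through the ``same divisibility'' clause of Theorem~\ref{thm:equiv-imc} yields $(f_{\emptyset,0})\subseteq(\mathscr{L}_{\mathfrak{p}}^{BDP})^2$. (Alternatively, and more directly, one can bypass Theorem~\ref{thm:equiv-imc} for the final step: the containment $(\ref{eq:howard-div})$ together with the value equality $(\ref{eq:equiv-spval})$ and Lemma~\ref{lem:3.2}, applied with $A=\Lambda^{\mathrm{ur}}$, $\mathfrak{a}=(\gamma-1)\Lambda^{\mathrm{ur}}$, $L=f_{\emptyset,0}$, and $I=(\mathscr{L}_{\mathfrak{p}}^{BDP})^2$, forces $I=(L)$ directly — this is precisely the argument already run inside the proof of Theorem~\ref{thm:mainresult}, so no new work is needed.) Combining the two containments gives $\mathrm{char}(X^{N^-}_{\emptyset,0})\Lambda^{\mathrm{ur}}=(\mathscr{L}_{\mathfrak{p}}^{BDP})^2$, which is Conjecture~\ref{conj:BDP}.

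I do not expect any genuine obstacle here: the corollary is a formal consequence of Theorem~\ref{thm:mainresult} and the machinery of Sections~\ref{sec:equiv-IMC}--\ref{sec:equiv-spval}, and indeed the proof of Theorem~\ref{thm:mainresult} already passes through $(f_{\emptyset,0})=(\mathscr{L}_{\mathfrak{p}}^{BDP})^2$ internally. The one point requiring a word of care is that Conjecture~\ref{conj:BDP} is an equality of ideals in $\Lambda^{\mathrm{ur}}$ rather than in $\Lambda$, so one should be slightly careful that $\mathscr{L}_{\mathfrak{p}}^{BDP}\in\Lambda^{\mathrm{ur}}$ need not lie in $\Lambda$; but since $\Lambda^{\mathrm{ur}}$ is faithfully flat over $\Lambda$ and $f_{\emptyset,0}\in\Lambda$, the identity $(f_{\emptyset,0})\Lambda^{\mathrm{ur}}=(\mathscr{L}_{\mathfrak{p}}^{BDP})^2$ is exactly the statement of Conjecture~\ref{conj:BDP} and nothing further is needed. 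Thus the proof reduces to a single sentence citing Theorem~\ref{thm:mainresult}, Theorem~\ref{thm:equiv-imc}, and the ramification clause of Condition~$\heartsuit$.
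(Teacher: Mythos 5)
Your proposal is correct and follows essentially the same route as the paper: cite Theorem~\ref{thm:mainresult} for Conjecture~\ref{conj:HPMC}, note that Condition~$\heartsuit$(2) supplies the ramification hypothesis needed to invoke the equivalence Theorem~\ref{thm:equiv-imc}, and conclude. Your extra remarks (tracking both one-sided divisibilities, the alternative of rerunning Lemma~\ref{lem:3.2} over $\Lambda^{\mathrm{ur}}$, and the faithful-flatness point) are all accurate but amount to unpacking the same one-line argument the paper gives.
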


\begin{proof}
Since $E[p]$ is ramified at all primes $\ell\vert N^-$ by hypothesis, the result follows from Theorem~\ref{thm:equiv-imc} and Theorem~\ref{thm:mainresult}. 
\end{proof}




\bibliographystyle{amsalpha}
\bibliography{bck}

\end{document}